\newtheorem{theorem}{Theorem}
\newtheorem{lemma}{Lemma}
\newtheorem{proposition}[theorem]{Proposition}
\theoremstyle{definition}
\theoremstyle{remark}
\newtheorem*{remark}{Remark}
\newcommand{\eps}		{\varepsilon}
\newcommand{\R}		{\mathbb{R}}
\newcommand{\Z}		{\mathbb{Z}}
\newcommand{\F}		{\mathcal{F}}
\newcommand{\jj}		{\vec\jmath}
\newcommand{\norm}[1]	{\lVert#1\rVert}
\newcommand{\flr}[1]	{\left\lfloor #1 \right\rfloor}
\newcommand{\ceil}[1]	{\left\lceil #1 \right\rceil}
\renewcommand{\exp}[1]	{\operatorname{exp}\left( #1 \right)}
\renewcommand{\Pr}[2][]	{\mathbf{P}_{#1}\left\{ #2 \right\}}
\newcommand{\Ex}[1]	{\mathbf{E}\left\{ #1 \right\}}
\newcommand{\Exc}[1]	{\mathbf{E}\{ #1 \}}
\newcommand{\ind}[1]	{\mathbf{1}_{#1}}
\newcommand{\wh}		{\widehat}
\newcommand{\wt}		{\widetilde}
\DeclareMathOperator{\Vol}{Vol}
\DeclareMathOperator{\Bin}{Binomial}
\title{\textbf{Connectivity threshold of Bluetooth graphs}}
\author{
Nicolas Broutin \\
INRIA  
\and Luc Devroye \\
McGill University 
\thanks{
The second author's research was sponsored by NSERC Grant A3456 and FQRNT Grant 90-ER-0291.}
\and Nicolas Fraiman \\
McGill University 
\and G\'abor Lugosi \\
ICREA and Pompeu Fabra University
\thanks{
The fourth author acknowledges support by the Spanish Ministry of Science and Technology grant MTM2009-09063 and by the PASCAL Network of Excellence under EC grant no.\ 506778.}
}
\date{\today}
\begin{document}

\maketitle

\begin{abstract}
We study the connectivity properties of random Bluetooth graphs that
model certain ``ad hoc'' wireless networks. The graphs are obtained as
``irrigation subgraphs'' of the well-known random geometric graph
model. There are two parameters that control the model: the radius $r$
that determines the ``visible neighbors'' of each node and the number
of edges $c$ that each node is allowed to send to these. The
randomness comes from the underlying distribution of data points in
space and from the choices of each vertex. We prove that no
connectivity can take place with high probability for a range of
parameters $r, c$ and completely characterize the connectivity
threshold (in $c$) for values of $r$ close the critical value for connectivity
in the underlying random geometric graph. 
\end{abstract}

\section{Introduction}\label{sec:intro}

It is sometimes necessary to \emph{sparsify} a network: given a connected graph, one wants to extract a sparser yet connected subgraph. In general, the protocol should be distributed, in that it should not involve any global optimization or coordination for obvious scaling reasons. The problem arises for instance in the formation of Bluetooth ad-hoc or sensor networks \cite{WhHoCh2005a}, but also in settings related to information dissemination (broadcast or rumour spreading) \cite{BGPS06,DM10}. 

In this paper, we consider the following simple and distributed algorithm for graph sparsification. Let $G_n=(V,E)$ be a finite undirected graph on $|V|=n$ vertices and edge set $E$. A \emph{random irrigation subgraph} $S_n=(V,\wh{E})$ of $G_n$ is obtained as follows: Let $2\leq c_n < n$ be a positive integer. For every vertex $v\in V$, we pick randomly and independently, without replacement, $c_n$ edges from $E$, each adjacent to $v$. These edges form the set of edges $\wh{E} \subset E$ of the graph $S_n$ (if the degree of $v$ in $G_n$ is less than $c_n$, all edges adjacent to $v$ belong to $\wh{E}$). The main question is how large $c_n$ needs to be so that the graph $S_n$ is connected, with high probability. Naturally, the answer depends on what the underlying graph $G_n$ is. 

When $G_n = K_n$ is the complete graph then for constant $c_n = c$,
\citet{FF82} proved that $S_n$ is $c$-connected with high
probability. This model is also known as the random $c$-out graph. In
a subsequent paper, \citet{FF83} considered probability of the existence
of a Hamiltonian cycle. They showed that there exists $c\leq 23$ such
that a Hamiltonian cycle exists with probability tending to $1$ as $n$
tends to infinity. In a recent article \citet{BF09} proved that $c = 3$
suffices.

Apart from the complete graph, the most extensively studied case is
when $G_n=G_n(r_n)$ is a \emph{random geometric graph} defined as
follows: Let $X_1, \dots ,X_n$ be independent, uniformly distributed
random points in the unit cube $[0,1]^d$. The set of vertices of the
graph $G_n(r_n)$ is $V = \{1,\dots, n\}$ while two points $i$ and $j$
are connected by an edge if and only if the Euclidean distance between
$X_i$ and $X_j$ does not exceed a positive parameter $r_n$, i.e., $E =
\{(i,j): \norm{X_i - X_j} < r_n\}$ where $\norm{\cdot}$ denotes the
Euclidean norm. Many properties of $G_n(r_n)$ are well understood.  We
refer to the monograph of \citet{Pen03} for a survey. The graph $S_n=S_n(r_n,c_n)$ was introduced in the context 
of the Bluetooth network \cite{WhHoCh2005a}, and
is sometimes called the \emph{Bluetooth} or \emph{scatternet graph} with
parameters $n,r_n$, and $c_n$. The model was introduced and studied in
\cite{FMPP04, PR04, DHMPP07, CNPP09, PPP09}.

We are interested in the behavior of the graph $S_n(r_n,c_n)$ for large values of $n$. When we say that a property of the graph holds \emph{with high probability} (whp), we mean that the probability that the property does not hold is bounded by a function of $n$ that goes to zero as $n\to \infty$. Equivalently we say that a sequence of random events $E_n$ occurs with high probability if $\lim_{n\to\infty} \Pr{E_n} = 1$. There are two independent sources of randomness in the definition of the random graph $S_n(r_n,c_n)$. One comes from the random underlying geometric graph $G_n(r_n)$ and the other from the choice of the $c_n$ neighbors of each vertex. 

Since we are interested in connectivity of $S_n(r_n,c_n)$, a minimal
requirement is that $G_n(r_n)$ should be connected. It is well known
that the connectivity threshold of $G_n(r_n)$ is $\gamma^*
\sqrt[d]{\log n/n}$ where $\gamma^* = 1/\sqrt[d]{\Vol B(0,1)}$, where
$\Vol$ is the Lebesgue measure and $B(0,1)$ is the unit ball in
$\R^d$. See \cite{Pen97,GK98} or Theorem 13.2 in \cite{Pen03}. This
means that $G_n$ is connected with high probability if $r_n$ is at least
$\gamma \sqrt[d]{\log n/n}$ where $\gamma > \gamma^*$ while $G_n$ is
disconnected with high probability if $r_n$ is less than $\gamma
\sqrt[d]{\log n/n}$ where now $\gamma < \gamma^*$. We always consider
values of $r_n$ above this level.

When $r_n=r$ is constant, the geometry has very little influence: For instance, \citet*{DJHPS05} showed that when $r_n=r$ is independent of $n$, $S_n(r,2)$ is connected with high probability. The case when $r_n$ is small is a more delicate issue, since the geometry now plays a crucial role. \citet*{CNPP09} proved that in dimension $d=2$ there exist constants $\gamma_1,\gamma_2$ such that if $r_n \geq \gamma_1 \sqrt{\log n/n}$ and $c_n \geq \gamma_2 \log(1/r_n)$, then $S_n(r_n,c_n)$ is connected with high probability. 

Arguably the most interesting values for $r_n$ are those just above the connectivity threshold for the underlying graph $G_n(r_n)$, that is, when $r_n$ is proportional to $\sqrt[d]{\log n/n}$. The results of \citet{CNPP09} show that for such values of $r_n$, connectivity of $S_n(r_n,c_n)$ is guaranteed, with high probability, when $c_n$ is a sufficiently large constant multiple of $\log n$. In this paper we show that this bound can be improved substantially. For the given choice of $r_n$, there is a critical $c_n$ for connectivity. It is quite easy to show that no connectivity can take place (whp) for constant $c_n$, and that for $c_n \geq \lambda \log n$ for a sufficiently large $\lambda$, the graph is connected whp (because the maximal cardinality of any ball of radius $r$ is whp $O(\log n)$). The objective of this paper is to nail down the precise threshold. Our main result is the following theorem.

\begin{theorem}\label{thm:main}
There exists a finite constant $\gamma^{**}$ such that for all $\gamma \geq \gamma^{**}$, and with 
\[
r_n = \gamma \left( \frac{\log n}{n} \right)^{1/d},
\]
we have, for all $\eps \in (0,2)$,
\[
\lim_{n \to \infty} \Pr{S_n(r_n,c_n) ~\text{\rm is connected} }
= \begin{cases}
    1 & \text{ if } c_n \ge \sqrt{ \frac{(2+\eps) \log n}{\log \log n} }, \\
    0 & \text{ if } c_n \le \sqrt{ \frac{(2-\eps) \log n}{\log \log n} }. 
\end{cases}
\]
\end{theorem}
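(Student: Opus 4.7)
The plan is to pinpoint a dominant obstruction to connectivity at this scale and to match lower and upper bounds by first- and second-moment calculations around it. I conjecture the critical bad event is the existence of what I will call a \emph{minimal bad clique}: a set $C\subset V$ of exactly $c_n+1$ vertices such that (a) they lie pairwise within distance $r_n$, (b) each $v\in C$ selects its $c_n$ out-edges to be precisely the other $c_n$ members of $C$, and (c) no vertex outside $C$ picks any vertex of $C$. Such a $C$ is an isolated component of $S_n$, and it has the minimal possible size since every vertex of $S_n$ has out-degree at least $c_n$. Writing $\rho := n\Vol B(0,r_n) = \alpha_d\gamma^d\log n$ (with $\alpha_d=\Vol B(0,1)$), the expected number of $(c_n+1)$-cliques of $G_n(r_n)$ is of order $n\rho^{c_n}/(c_n+1)!$, the conditional probability of (b) is approximately $(c_n!/\rho^{c_n})^{c_n+1}$, and the conditional probability of (c) contributes a factor $e^{-\Theta(c_n^2)}=e^{o(\log n)}$ by a standard Poisson-type computation on the outside pickers. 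Multiplying and applying Stirling, after cancellation one obtains
\[
\mathbf{E}[\#\text{minimal bad cliques}] \;=\; n^{\,1-\frac{c_n^2\log\log n}{2\log n}+o(1)},
\]
which vanishes as $n^{-\eps/2+o(1)}$ when $c_n^2\ge(2+\eps)\log n/\log\log n$ and diverges as $n^{\eps/2+o(1)}$ in the complementary regime, exactly matching the stated threshold.

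For the lower bound I would promote the first-moment divergence to a high-probability existence statement via a Paley--Zygmund / second-moment argument. The dominant contribution to $\mathbf{E}[X^2]$ comes from pairs of candidate cliques that are geometrically far apart and hence almost independent, contributing $\mathbf{E}[X]^2(1+o(1))$; pairs sharing a vertex or whose $r_n$-neighbourhoods overlap have expected count of smaller order and can be absorbed. Equivalently, one can tessellate $[0,1]^d$ into cells of side $\Theta(r_n)$ and run a Chen--Stein argument on the indicators ``a minimal bad clique sits near cell $i$'', where only $O(1)$ neighbouring cells can interact.

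For the upper bound I would union-bound over the size $k$ of any putative isolated component $A\subseteq V$. In the range $c_n+1\le k\le K$ for a slowly growing cutoff $K$, the first-moment calculation extends to $k$-subsets, and the resulting log-expected count is decreasing in $k$ over this range (the dominant derivative of the logarithm is $-(c_n-1)\log(\rho/k)<0$), so the whole sum is controlled by the $k=c_n+1$ term and is still $n^{-\eps/2+o(1)}$. For $k>K$, I would combine the whp bound $O(\log n)$ on the number of points in any $r_n$-ball with a geometric isoperimetric estimate: any $A$ with $|A|$ and $n-|A|$ both $\gg\log n$ has $\Omega(|\partial A|)$ candidate $G_n$-edges across its boundary, and the probability that the independent $c_n$-out choices of both endpoints kill all of them is at most $e^{-\Omega(c_n|\partial A|/\log n)}$, which easily beats the number of such cuts once $c_n^2\gtrsim\log n/\log\log n$. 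The main obstacle I anticipate is keeping the constant $\sqrt{2}$ sharp uniformly in $k$: crude union bounds summing naive per-$k$ estimates lose poly-logarithmic factors that can shift the threshold, so the counting and the geometric estimates must each be sharp to leading order in the exponent, and the second-moment bookkeeping for the lower bound must handle boundary effects of $[0,1]^d$ carefully so as not to lose the factor $2$ in the constant.
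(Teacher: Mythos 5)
Your identification of the critical obstruction (an isolated $(c+1)$-clique), the first-moment computation locating the threshold at $c^2\sim 2\log n/\log\log n$, and the localized second-moment/Chen--Stein argument for the lower bound all match the paper, which carries this out by tessellating into disjoint large cubes, applying a two-term inclusion--exclusion within each cube conditionally on the point set, and using conditional independence across cubes. Your first-moment bound for isolated components of size $k$ up to a small constant times $\log n$ (restricted, as it must be, to sets that are connected in $G_n$, so that the number of candidates is $n(C\log n)^k$ rather than $\binom{n}{k}$) is also essentially the paper's Lemma on the smallest component size. The gap is in the upper bound for \emph{large} components. Your proposed estimate for an isolated set $A$ is $e^{-\Omega(c\,e(A,A^c)/\log n)}$, where $e(A,A^c)$ is the number of $G_n$-edges across the cut; this decays only with the \emph{surface} of $A$ (for a solid blob of $k$ points in $d=2$ it is $e^{-\Omega(c\sqrt{k\log n})}$), whereas the entropy of the union bound is of \emph{volume} order: even restricting to $G_n$-connected candidate sets there are $e^{\Theta(k\log\log n)}$ of them, and a Peierls count at the cell level does not determine which individual points near the boundary belong to $A$. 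Consequently the union bound fails once $k$ exceeds roughly $c^2\log n/(\log\log n)^2$, and no choice of the cutoff $K$ closes the range between $\Theta(\log n)$ and $n/2$. This is not a bookkeeping issue about the constant $\sqrt 2$; it is the reason a direct first-moment/isoperimetry argument cannot finish the proof.

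The paper bridges this range with two ideas absent from your proposal. First, a sprinkling argument: the $c$ choices are split into a main round of $\hat c=\sqrt{(2+\eps/2)\log n/\log\log n}$ choices, after which the first moment guarantees all components have size at least $\delta\log n$, followed by $(c-\hat c)/L$ further rounds of $L$ independent choices each; in every round each component of size between $\delta\log n$ and $n^{1/4}$ merges with another one with probability $1-n^{-L\xi}$ (a union bound over at most $n$ \emph{components}, not over candidate sets), so the minimum component size doubles each round and reaches $e^{(\log n)^{1/3}}$. Second, a percolation-style argument on a grid of cells: cells are colored black if internally connected, cell-to-cell links exist whp, the largest $*$-connected white cluster is at most $2(\log n)^{2/3}$ cells, and a Kesten-type duality plus a discrete isoperimetric inequality shows that any component of size at least $e^{(\log n)^{1/3}}$ must meet the black backbone. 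You would need substitutes for both steps; in particular, for components of size, say, $n/\log^{10}n$, your isoperimetric union bound is off by an exponential factor.
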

\noindent 
It might be a bit surprising that the threshold is virtually independent of $\gamma$. The threshold (in $c$) is also independent of the dimension $d$. This is probably less surprising since $c$ counts a number of neighbors and the number of visible points in a ball is of order $\log n$, independently of $d$, for the range of $r$ we consider.

The structure of the paper is the following: In Section~\ref{sec:lower} we prove a lower bound on the critical value of $c_n$
needed to obtain a connected graph whp given a value of $r_n$ in the
range where connectivity could be achieved. In Section~\ref{sec:upper}
we show that $S_n(r_n,c_n)$ is connected whp where $r_n$ is
proportional to $\sqrt[d]{\log n/n}$ and $c_n$ is just above the
corresponding value obtained in Section~\ref{sec:lower} nailing down
the precise threshold in that case. Finally in Section~\ref{sec:diameter} we obtain an upper bound on the diameter of
$S_n(r_n,c_n)$ for the same values of $r_n$ as in Section~\ref{sec:upper} but with a larger value of $c_n$. In particular, 
we show that if $c_n$ is a sufficiently large constant times
$\sqrt{\log n}$ then the diameter of $S_n(r_n,c_n)$ is $O(1/r_n)$
which is the same order of magnitude as for the underlying random
geometric graph.

A final notational remark: To ease the reading for the rest of the paper we omit the subscript $n$ in the parameters $r$ and $c$ as well as in most of the events and sets we define that depend on $n$.

\section{A lower bound for connectivity on the whole range}\label{sec:lower}

The aim of this section is to prove a lower bound on the value of $c$ needed to obtain connectivity whp for a given value of $r$. First we will need the following lemma on the regularity of uniformly distributed points. Let $N(A)= \sum_{i=1}^n \ind{[X_i\in A]}$ be the number of data points in a set $A\subset [0,1]^d$. We  consider $\gamma^{**} > \gamma^{*}$ to provide a sufficient margin of play.

\begin{lemma}[Density Lemma]\label{lem:density}
Let $\gamma^{**} = 2\sqrt[d]{3}$ and $f(x)=x\log x - x + 1$. Define $\alpha,\beta$ to be the solutions to $f(x)=1/2$ smaller and greater than $1$ respectively. Grid the cube $[0,1]^d$ using cells of side length $\ell = (1/3)\flr{1/r}^{-1}$. If
\[
\gamma^{**}\left(\frac{\log n}{n}\right)^{1/d} < r < 1~,
\]
then the following event occurs whp:
\[
N(C) \in [\alpha n\ell^d, \beta n\ell^d] \quad\text{for every cell $C$}.
\]
\end{lemma}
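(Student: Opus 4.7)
The plan is to treat the density lemma as a concentration statement for the binomial, cell by cell, and then close the argument by a union bound over all cells of the grid. For a fixed cell $C$ of side $\ell$, the count $N(C)$ is distributed as $\Bin(n,\ell^d)$ with mean $\mu := n\ell^d$, since the $X_i$ are i.i.d.\ uniform on $[0,1]^d$ and $\Vol(C) = \ell^d$. The Cram\'er--Chernoff bound for the binomial reads $\Pr{\Bin(n,p) \ge \beta np} \le \exp{-np\, f(\beta)}$ for $\beta>1$, with the symmetric statement $\Pr{\Bin(n,p) \le \alpha np} \le \exp{-np\, f(\alpha)}$ for $\alpha \in (0,1)$. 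The very definition of $\alpha,\beta$ as the two roots of $f(x)=1/2$ is therefore tailor-made so that a single cell is bad with probability at most $2\exp{-\mu/2}$.

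Applying a union bound over the $(3\lfloor 1/r\rfloor)^d$ cells (this count is an integer because $1/\ell = 3\lfloor 1/r\rfloor$ by construction) yields
$$
\Pr{\exists\, C : N(C) \notin [\alpha n\ell^d,\,\beta n\ell^d]}
\;\le\; 2\,(3\lfloor 1/r\rfloor)^d \exp{-n\ell^d/2}.
$$
To show the right-hand side tends to zero I would use $\ell \ge r/3$ (because $\lfloor 1/r\rfloor \le 1/r$) to get $n\ell^d \ge nr^d/3^d \ge (\gamma^{**}/3)^d\log n$, and bound the number of cells above by $(3/r)^d \le (3/\gamma^{**})^d\, n/\log n$. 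After taking logarithms the task reduces to comparing $\tfrac12(\gamma^{**}/3)^d \log n$ against $\log((3/\gamma^{**})^d\, n/\log n) = \log n - \log\log n + O(1)$.

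The main obstacle is exactly this comparison of constants: with $\gamma^{**} = 2\sqrt[d]{3}$ one has $(\gamma^{**}/3)^d = 3(2/3)^d$, so the exponent $\tfrac12(\gamma^{**}/3)^d$ has to be nudged past $1$ using the strict inequality $\lfloor 1/r\rfloor < 1/r$ (which strictly improves the cell count) together with the exact numerical values of the two roots of $f(x)=1/2$. Once this bookkeeping is done, the remainder of the argument is entirely routine: identify $N(C)$ as a binomial, apply the Chernoff estimate in both tails, and sum over a grid whose cardinality is only polynomial in $n/\log n$.
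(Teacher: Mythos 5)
Your proposal follows the paper's proof line for line up to the final accounting: identify $N(C)$ as $\Bin(n,\ell^d)$, use the one-sided Chernoff bounds so that the choice of $\alpha,\beta$ as the two roots of $f(x)=1/2$ makes a fixed cell bad with probability at most $2e^{-n\ell^d/2}$, and union bound over the $(3\flr{1/r})^d$ cells. All of that is correct and is exactly what the paper does.

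The step you single out as ``the main obstacle'' is, however, a genuine gap, and the repair you sketch cannot work. The deficit is a constant factor in the exponent, not a lower-order term: with $\gamma^{**}=2\sqrt[d]{3}$ you correctly compute $\tfrac12(\gamma^{**}/3)^d=\tfrac32(2/3)^d$, which equals $1$ for $d=1$ (where the extra $\log\log n$ from the cell count saves the day) but equals $2/3$ for $d=2$ and decreases further with $d$. The strict inequality $\flr{1/r}<1/r$ improves $\ell$ over $r/3$ only by a factor $1+O(r)=1+o(1)$ in the critical regime $r=\Theta((\log n/n)^{1/d})$, and the numerical values of $\alpha,\beta$ enter the bound only through $f(\alpha)=f(\beta)=1/2$, so neither can push the exponent past $1$. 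Concretely, for $d=2$ at the bottom of the admissible range the union bound gives $\Theta(n/\log n)\cdot n^{-2/3}\to\infty$; moreover, since the Chernoff bound is tight up to factors polynomial in $n\ell^d=\Theta(\log n)$ and the cell counts are negatively associated, the expected number of deficient cells actually diverges, so no refinement of this argument can succeed with that constant. You should be aware that the paper's own proof has the identical problem hidden in the unjustified assertion $n(r/3)^d\ge 2\log n$, which holds only for $d=1$ when $\gamma^{**}=2\sqrt[d]{3}$. The fix is to the constant, not the analysis: take $\gamma^{**}=3\sqrt[d]{2}$ (or anything at least that large), so that $n\ell^d\ge n(r/3)^d>2\log n$, each cell is bad with probability $O(1/n)$, and the union bound over $O(n/\log n)$ cells closes for every $d$; this is harmless elsewhere since the theorems only assume $\gamma\ge\gamma^{**}$.
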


\begin{proof}
We use the binomial Chernoff bound: If $\xi\sim \Bin(n,p)$ and $t>0$ then
\[
\max(\Pr{\xi \leq t}, \Pr{\xi \geq t}) \leq \exp{t-np-t\log\left(\frac{t}{np}\right)},
\]
for reference see \cite{Che52, JLR00}.
Given a fixed cell $C$, the number of data points $N(C)$ is distributed as $\Bin(n,\ell^d)$. Thus, writing $f(x)=x-1-x\log x$, we have
\begin{align*}
\Pr{N(C) \leq \alpha n\ell^d} &\leq \exp{-f(\alpha)n\ell^d}, \\
\Pr{N(C) \geq \beta n\ell^d} &\leq \exp{-f(\beta)n\ell^d}.
\end{align*}
Define the event $D(C) = \{N(C) \in [\alpha n\ell^d, \beta n\ell^d]\}$. We can apply a union bound over all the cells to obtain
\[
\Pr{\bigcup_C D(C)^c} \leq \sum_C \Pr{D(C)^c} \leq \sum_C 2e^{-n\ell^d/2} \leq \ell^{-d} 2e^{-n\ell^d/2} \to 0,
\]
because we have that $\ell^{-d}=O(r^{-d})=O(n/\log n)$ and $n\ell^d \geq n(r/3)^d \geq 2\log n$ so that $e^{-n\ell^d/2}=O(1/n)$.
\end{proof}

\begin{remark}
When the event from the previous lemma holds then we have $N(B(x,r)) \in [2^d\alpha n\ell^d , 7^d \beta n\ell^d]$ for any point $x\in [0,1]^d$ where we write $B(x,r)=\{y: \norm{x-y}< r\}$. This is because $B(x,r)$ always contains at least $2^d$ cells and is covered by at most $7^d$ cells. See Figure \ref{fig:covering}.
\end{remark}

\begin{figure}[h]
\centering
\includegraphics[height=3cm]{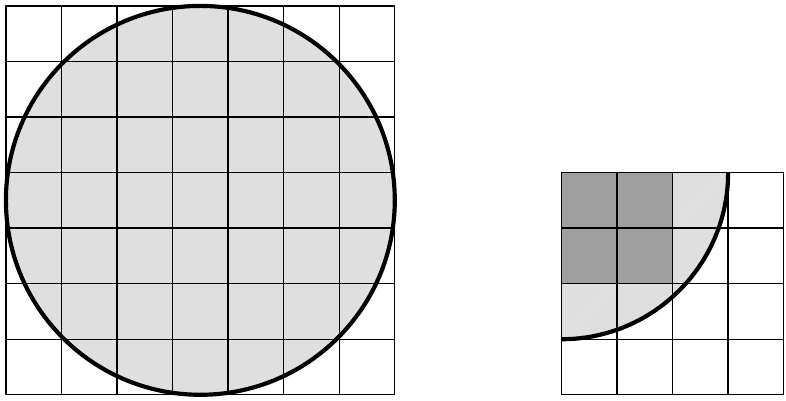}
\caption{\label{fig:covering}
A ball $B(x,r)$ in dimension $d=2$ covered by $7^d$ and covering at least $2^d$ cells. }
\end{figure}

The next theorem shows that for any value of $r$ above the
connectivity threshold of the random geometric graph one cannot hope
that $S_n$ is connected unless $c$ is at least of the order of
$\sqrt{\log n/\log nr^d}$. In particular, when $r$ is just above the
threshold (i.e., it is proportional to $\sqrt[d]{\log n/n}$) then $c$
must be at least of the order of $\sqrt{\log n/\log \log n}$. We say
that the data points at distance less than $r$ from $X_i$ are the
\emph{visible neighbors} of $i$ (i.e., the neighbors of $i$ in $G_n$)
and that $B(X_i,r)$ is the \emph{visibility ball} of $i$.  Note that
the following result implies the lower bound of Theorem \ref{thm:main}.

\begin{theorem}\label{thm:lower}
Let $\eps \in (0,1)$ and $\lambda \in [1,\infty]$ be such that
\[
\gamma^{**} \left(\frac{\log n}{n}\right)^{1/d} < r < 1,
\qquad
\frac{\log nr^d}{\log\log n} \to \lambda
\qquad\text{and}\qquad
c=\flr{\sqrt{(1-\eps)\left(\frac{\lambda}{\lambda-1/2}\right)\frac{\log n}{\log nr^d}}}.
\] 
Then $S_n(r,c)$ is not connected whp. (In the case of $\lambda=\infty$, 
we define $\lambda/(\lambda-1/2)=1$.)
\end{theorem}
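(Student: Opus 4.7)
The plan is to exhibit, with high probability, a small isolated component of $S_n(r,c)$, which forces disconnection. Specifically, I would look for a set $T$ of exactly $c+1$ data points lying mutually within distance $r$ (so they form a clique of $G_n(r)$) such that every vertex of $T$ selects as its $c$ random neighbours precisely the other $c$ members of $T$, and no vertex outside $T$ selects any member of $T$. Any such $T$ is an isolated connected component of $S_n(r,c)$ of size $c+1$; since $S_n$ has many more than $c+1$ vertices whp (by the Density Lemma), this certifies disconnection.

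For the first-moment computation I would condition on the density event of Lemma~\ref{lem:density}, so that every visibility ball contains a number of points of the same order $N := nr^d$, and cover $[0,1]^d$ by a grid of disjoint cells of side length $\ell' \le r/(2\sqrt{d})$, so that any two points in a common cell are within distance $r$ of each other; this yields $M = \Theta(r^{-d})$ cells. For each cell $C$ and each $(c+1)$-subset $T$ of data points in $C$, let $Z_{C,T}$ indicate that $T$ is an isolated component of $S_n$, and set $Y = \sum_{C,T} Z_{C,T}$. The probability that $Z_{C,T}=1$ factors into the \emph{internal} event that each of the $c+1$ vertices of $T$ picks exactly the other $c$ members (of order $\binom{N-1}{c}^{-(c+1)}$) and the \emph{external} event that no outside vertex picks into $T$; the latter equals $\prod_u (1 - k_u/(N_u-1))^c$ where $k_u$ is the number of cluster members visible to $u$, and since $\sum_u k_u \le (c+1)N$ this factor is of size $\exp(-O(c^2)) = n^{-o(1)}$. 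Summing over the $\binom{N}{c+1}$ subsets per cell and over the $M$ cells, using the identity $\binom{N}{c+1}\binom{N-1}{c}^{-(c+1)} = \tfrac{N}{c+1}\binom{N-1}{c}^{-c}$ together with the estimate $\log\binom{N-1}{c} = c\log(N/c) + O(c)$, one obtains
\[
\log \Ex{Y} \;=\; \log n - c^2\log(N/c)\,(1+o(1)).
\]
The hypothesis gives $c^2\log(nr^d) \le (1-\eps)\tfrac{\lambda}{\lambda-1/2}\log n$, and using $\log(N/c) = \log(nr^d) - \tfrac{1}{2}\log c^2 + o(\log\log n) = \bigl((\lambda-\tfrac{1}{2})/\lambda + o(1)\bigr)\log(nr^d)$ (with the convention $(\lambda-\tfrac{1}{2})/\lambda = 1$ at $\lambda=\infty$), one concludes $c^2\log(N/c) \le (1-\eps)\log n\,(1+o(1))$, so $\Ex{Y}$ grows as a positive power of $n$.

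To turn this into $Y > 0$ whp I would apply a second-moment argument via Chebyshev's inequality. Events $Z_{C,T}$ and $Z_{C',T'}$ are independent conditionally on the point locations whenever the cells $C, C'$ lie at distance greater than $2r$, because then no single vertex is visible to both clusters and the random pick vectors involved are disjoint. Each cell has $O(1)$ neighbouring cells within distance $2r$, so the non-trivial covariance pairs contribute $O(\Ex{Y})$ to the variance, giving $\Pr{Y > 0} \to 1$. The main obstacle is the sharpness of the first-moment calculation: since $\log N$ and $\log c$ are both of order $\log\log n$ in the relevant regime, the correction from using $\log(N/c)$ in place of $\log N$ is non-negligible, and it is exactly this discrepancy that produces the factor $\lambda/(\lambda-1/2)$ in the threshold. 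The external factor $\exp(-O(c^2)) = n^{-o(1)}$, though not trivially small, is comfortably dominated by the polynomial surplus in the first moment.
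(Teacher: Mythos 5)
Your plan --- exhibit an isolated $(c+1)$-clique, compute a first moment sharpened by the $\log(N/c)$ versus $\log N$ correction (which is exactly where the factor $\lambda/(\lambda-1/2)$ comes from), absorb the external-avoidance factor as $\exp{-O(c^2)}=n^{-o(1)}$, and upgrade to $Y>0$ via a second-moment argument exploiting spatial independence --- is essentially the paper's proof. The paper packages it differently: it only looks for cliques in the center cells of disjoint blocks of $7^d$ cubes, so that the per-block events are \emph{exactly} independent given the points, and it replaces Chebyshev by the two-term Bonferroni lower bound $\Pr{E_{\jj}}\ge\sum_Q\Pr{E(Q)}-\sum_{Q\ne Q'}\Pr{E(Q)\cap E(Q')}$ inside each block; this is the same computation as your variance bound. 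The one claim in your write-up that is not correct as stated is that the non-trivial covariance pairs contribute $O(\Exc{Y})$. For two \emph{disjoint} clusters $T,T'$ in the same or neighbouring cells you only get $\Pr{Z_{C,T}\cap Z_{C',T'}}\le \exp{O(c^2)}\Pr{Z_{C,T}}\Pr{Z_{C',T'}}$ (the joint event drops both external-avoidance factors), and there are roughly $\binom{N}{c+1}^2$ such pairs per cell, so the nearby-pair sum is of order $n^{o(1)}\,\Exc{Y}^2/M$ with $M=\Theta(r^{-d})$ the number of cells --- not $O(\Exc{Y})$. This is still $o(\Exc{Y}^2)$, but only because $r^{-d}\ge \sqrt{n}$, which follows from $c\ge 2$ and $\lambda\ge 1$ forcing $\log nr^d\le\tfrac12\log n$; that is precisely the inequality the paper invokes to make its pair term $m^dp_n^{(2)}$ vanish, and you need to make it explicit for the Chebyshev step to close.
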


Note that in the range of $r$ considered, we do have $\lambda\ge 1$.

\begin{proof}
We show that there exists an isolated $(c+1)$-clique whp. To do this we tile the unit cube $[0,1]^d$ with cells of side $\ell = (1/3) \flr{1/r}^{-1}$. We repeatedly use the fact that $r/3 \leq \ell < r$. We group the cells in large cubes of $7^d$ cells (which we denote by $L_{\jj}$) and we denote by $C_{\jj}$ the center cell of each large cube. See Figure \ref{fig:largecell}. Let $m = \flr{1/(7\ell)}$ be the number of large cubes we can fit on a side of $[0,1]^d$. Write $\jj=(j_1,\dots,j_d)$ where $0\leq j_k \leq m$ and define
\[
C_{\jj} = \prod_{k=1}^d \big[(7j_k+3)\ell, (7j_k+4)\ell \big]
\qquad\text{and}\qquad
L_{\jj} = \prod_{k=1}^d \big[7j_k\ell, (7j_k+7)\ell \big].
\]
\begin{figure}[h]
\centering
\includegraphics[height=6cm]{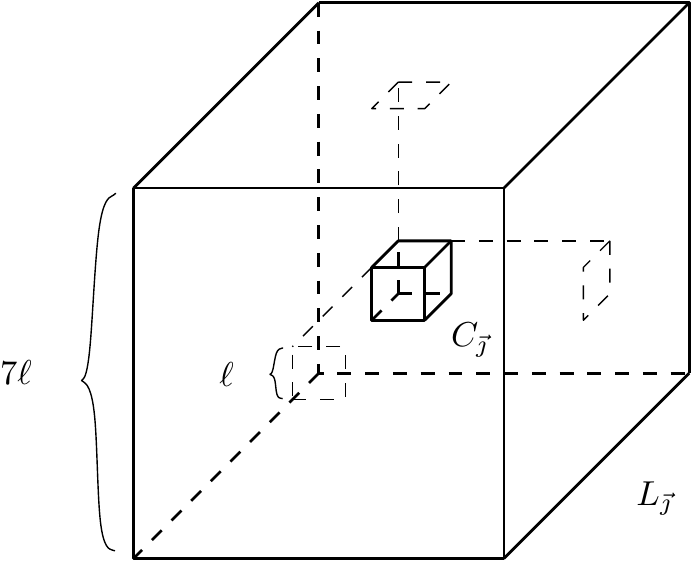}
\caption{\label{fig:largecell}
A large cube $L_{\jj}$ and its center cell $C_{\jj}$.}
\end{figure}

We slightly abuse notation and write $i\in C_{\jj}$ (or $L_{\jj}$) to mean $X_i \in
C_{\jj}$ (or $L_{\jj}$). Define the random family of sets $\F_{\jj} =
\{Q\subset V: |Q|=c+1,\; Q \subset C_{\jj}\}$. Denote by $E(Q)$ the
event that $Q$ forms an isolated clique in $S_n$. Consider the event
$E_{\jj}$ that there exists an isolated $(c+1)$-clique in $C_{\jj}$,
i.e., $E_{\jj} = \cup_{Q\in \F_{\jj}} E(Q)$. Let $D$ be the event that
all cells of side $\ell$ have cardinality in the range described in
Lemma \ref{lem:density}. By conditioning on all the data points
$X_1,\dots,X_n$ the only randomness we consider are the choices of
each node among their visible neighbors. Thus, the probability that
there is no isolated $(c+1)$-clique may be bounded as
\begin{align*}
\Pr{\cap_{\jj} E_{\jj}^c}
&\leq \Pr{D^c} + \Pr{\cap_{\jj} E_{\jj}^c, D} \\
&= \Pr{D^c} + \Ex{\Pr{\cap_{\jj} E_{\jj}^c, D \mid X_1,\dots,X_n} \Big.} \\
&= \Pr{D^c} + \Ex{\Pr{\cap_{\jj} E_{\jj}^c \mid X_1,\dots,X_n} \cdot \ind{D} \Big.} \\
&= \Pr{D^c} + \Ex{\prod_{\jj} \Pr{E_{\jj}^c \mid X_1,\dots,X_n} \cdot \ind{D} } 
\end{align*}
where the last equality comes from the fact that the events $E_{\jj}$ are independent conditionally on $X_1,\dots,X_n$ since $E_{\jj}$ only involves the choices of the indices with data points in $L_{\jj}$ (if $\norm{x-y}< r$ for some $y\in C_{\jj}$ then $x\in L_{\jj}$) and these cubes are disjoint (except, possibly, on their boundary which has measure zero, so with probability $1$ no data point will lie there). If we find a uniform bound $p_n\in [0,1]$ such that for every $\jj$
\[
\Pr{E_{\jj} \mid X_1,\dots, X_n} \geq p_n\ind{D},
\]
since $D$ holds whp then we have
\begin{align*}
\Pr{\cap_{\jj} E_{\jj}^c}
&\leq o(1) + (1-p_n)^{m^d} \\
&\leq o(1) + e^{-m^dp_n}.
\end{align*}
Notice that the upper bound goes to zero if $m^dp_n\to \infty$. In that case, $\cup_{\jj} E_{\jj}$ happens whp and the proof is complete. In Lemma \ref{lem:pnbound} we show can define $p_n = p_n^{(1)} - p_n^{(2)}$ where
\begin{align*}
p_n^{(1)} &= \exp{c^2(\log c -\log nr^d) + \log nr^d - o(\log n) \Big.}, \\
p_n^{(2)} &= \exp{2c^2(\log c - \log nr^d) + 2\log nr^d + o(\log n) \Big.}.
\end{align*}
Recall that $m=\flr{1/7\ell} = \Theta(r)$. To finish the proof it suffices to show that 
\[
m^d p_n^{(1)} \to \infty \qquad\text{and}\qquad m^d p_n^{(2)} \to 0.
\]
For the first limit we have, when $\lambda<\infty$,
\begin{align*}
m^d p_n^{(1)}
&\geq \exp{c^2(\log c - \log nr^d) + \log n - o(\log n) \Big.} \\
&\geq \exp{\frac{(1-\eps)\lambda\log n}{(\lambda-1/2)\log nr^d}\left(\frac{1}{2}\log\log n - \log nr^d\right) + \log n - o(\log n) } \\
&\geq \exp{\frac{(1-\eps)\lambda}{\lambda-1/2}\left(\frac{1}{2\lambda} - 1\right)\log n + \log n - o(\log n) } \\
&\geq \exp{-(1-\eps)\log n + \log n - o(\log n) \Big.} \\
&\geq \exp{\eps \log n + o(\log n) \Big.} \to \infty.
\end{align*}
When $\lambda=\infty$, the proof is analogous, if we substitute $\frac{\lambda}{\lambda-1/2}$ by $1$ and $\frac{1}{2\lambda}$ by $0$ in the previous equation. For the second limit we have
\begin{align*}
m^d p_n^{(2)}
&\leq \exp{2c^2 (\log c - \log nr^d) + 3/2 \log n + o(\log n) \Big.} \\
&\leq \exp{-2(1-\eps)\log n + 3/2 \log n + o(\log n) \Big.} \\
&\leq \exp{- (1/2-\eps)\log n + o(\log n) \Big.} \to 0,
\end{align*}
where we used the fact 
\[
\frac{\log nr^d}{\log n} = \frac{(1-\eps)\lambda}{(\lambda-1/2)c^2} \leq \frac{1}{2}. \qedhere
\]
\end{proof}
\medskip

We now show how to find $p_n$ with the desired properties.

\begin{lemma}\label{lem:pnbound}
With the notation from Theorem \ref{thm:lower} above we have that 
$\Pr{E_{\jj} \mid X_1,\dots, X_n} \geq p_n\ind{D}$, where $p_n = p_n^{(1)} - p_n^{(2)}$ and
\begin{align*}
p_n^{(1)} &= \exp{c^2(\log c -\log nr^d) + \log nr^d - o(\log n) \Big.}, \\
p_n^{(2)} &= \exp{2c^2(\log c - \log nr^d) + 2\log nr^d + o(\log n) \Big.}.
\end{align*}
\end{lemma}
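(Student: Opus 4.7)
The plan is to apply a second-level Bonferroni inequality to the events $E(Q)$, $Q \in \F_{\jj}$:
\[
\Pr{E_{\jj} \mid X_1, \dots, X_n}
\;\geq\; \sum_{Q \in \F_{\jj}} \Pr{E(Q) \mid X_1, \dots, X_n}
\;-\; \sum_{\{Q, Q'\} \subset \F_{\jj}} \Pr{E(Q) \cap E(Q') \mid X_1, \dots, X_n}.
\]
On the event $D$ I will show that the first sum is at least $p_n^{(1)}$ and the second at most $p_n^{(2)}$. A useful preliminary remark is that if $Q \neq Q'$ and both $E(Q)$ and $E(Q')$ occur, then $Q$ and $Q'$ must be disjoint: any $v \in Q \cap Q'$ would be forced to pick $Q \setminus \{v\}$ and $Q' \setminus \{v\}$ as its entire list of $c$ choices, so these two sets coincide and $Q = Q'$.

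To evaluate $\Pr{E(Q) \mid X_1, \dots, X_n}$ I condition on the point positions and write it as a product over choices of individual vertices. Each $q \in Q$ must select exactly $Q \setminus \{q\}$ among its $d(q) = N(B(X_q,r)) - 1$ visible neighbors, contributing a factor $1/\binom{d(q)}{c}$; each vertex $w \notin Q$ must avoid all of $Q$, contributing $\binom{d(w) - k_w}{c}/\binom{d(w)}{c}$ with $k_w = |Q \cap B(X_w,r)| \leq c + 1$. Only $w \in L_{\jj}$ can contribute nontrivially. Under $D$ we have $d(w) = \Theta(nr^d)$ and $N(L_{\jj}) = O(nr^d)$, so the isolation product is bounded below by $\exp{-O(c^2 N(L_{\jj})/nr^d)} = \exp{-O(c^2)}$, and since $c^2 = O(\log n/\log\log n) = o(\log n)$ this factor is absorbed into the $o(\log n)$ slack of $p_n^{(1)}$.

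The main computation then proceeds by Stirling. On $D$, $(\alpha/3^d) n r^d \leq N(C_{\jj}) \leq \beta n r^d$ and $d(q) = \Theta(nr^d)$. Using $\binom{m}{k} \geq (m/k)^k$ for the $(c+1)$-subset count and $\binom{m}{k} \leq m^k/k!$ for the denominators, together with $\log c! = c\log c - c + O(\log c)$, the leading $c\log nr^d$ terms arising from $\binom{N(C_{\jj})}{c+1}$ cancel the corresponding contributions from the $(c!)^{c+1}$ factor, leaving the dominant $c^2(\log c - \log nr^d) + \log nr^d$ and remainders of order $O(c^2) + O(c\log c)$ that are all $o(\log n)$. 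This yields $\sum_Q \Pr{E(Q)\mid X} \geq p_n^{(1)}$ on $D$.

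The pair sum is treated identically. After dropping the $\leq 1$ isolation factor, $\Pr{E(Q) \cap E(Q') \mid X}$ for disjoint $Q, Q'$ reduces to a product of $2(c+1)$ terms $1/\binom{d(q)}{c}$. The number of unordered disjoint pairs is at most $\binom{N(C_{\jj})}{c+1}^2/2$, and applying the Stirling bounds with $\binom{m}{k} \leq (em/k)^k$ on the counts and $\binom{m}{k} \geq (m/k)^k$ on the denominators produces exactly twice the leading exponent of $p_n^{(1)}$, matching $p_n^{(2)}$ up to $o(\log n)$ remainders. The main technical obstacle is the careful bookkeeping of the many $O(c)$, $O(c^2)$, and $O(c\log c)$ errors and verifying they all fit in the $o(\log n)$ slack; this is precisely where the hypothesis $c^2 = O(\log n/\log\log n)$ is used.
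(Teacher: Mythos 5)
Your proposal is correct and follows essentially the same route as the paper: the same Bonferroni lower bound, the same factorization of $E(Q)$ into a clique event and an avoidance event (you phrase the per-vertex probabilities via binomial coefficients rather than products of ratios, which is equivalent), the same disjointness observation for the pair sum, the same use of the density event $D$ to bound $|\F_{\jj}|$ and the visibility-ball cardinalities, and the same absorption of all $O(c^2)$ and $O(c\log c)$ error terms into the $o(\log n)$ slack. No gaps.
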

\begin{proof}
We have that $E_{\jj} = \cup_{Q\in \F_{\jj}} E(Q)$ which allows us to use the following lower bound from inclusion--exclusion
\begin{equation}\label{eq:sumEj}
\Pr{E_{\jj}\mid X_1,\dots,X_n} \geq \sum_{Q\in \F_{\jj}} \Pr{E(Q)\mid X_1,\dots,X_n} 
- \mathop{\sum_{Q,Q'\in \F_{\jj}}}_{Q \neq Q'} \Pr{E(Q)\cap E(Q')\mid X_1,\dots,X_n}.
\end{equation}
Using the fact that $c^2 = O(\log n / \log nr^d) = O(\log n / \log\log n) = o(\log n)$ we have the following bounds for the size of $\F_{\jj}$ when $D$ holds:
\begin{align}
|\F_{\jj}\,| &\geq \binom{\ceil{\alpha n\ell^d}}{c+1} 
\geq \left(\frac{\alpha nr^d/3^d}{c+1}\right)^{c+1}
\geq \exp{-c(\log c - \log nr^d) + \log nr^d - o(\log n) \Big.}, \label{eq:lowerFj} \\
|\F_{\jj}\,| &\leq \binom{\flr{\beta n\ell^d}}{c+1} 
\leq \left(\frac{e\beta nr^d}{c+1}\right)^{c+1}
\leq \exp{-c(\log c - \log nr^d) + \log nr^d + o(\log n) \Big.}. \label{eq:upperFj}
\end{align}

We first bound the first sum in \eqref{eq:sumEj}. Define the event $E_1(Q)$ of $Q$ being a clique (i.e., every point $j\in Q$ chooses the remaining $c$ points $i$ with $i \in Q$, $i\neq j$ to link to), and the event $E_2(Q)$ of $Q$ being avoided in $L_{\jj}$ (i.e., every $i \in L_{\jj}\setminus Q$ avoids choosing the nodes in $Q$ as an endpoint of any of its $c$ links). Clearly if $Q\in \F_{\jj}$ then $E(Q) = E_1(Q) \cap E_2(Q)$ because the only nodes that can link to $Q$ are in $L_{\jj}$ in that case. Furthermore, conditionally on $X_1,\dots,X_n$, the events $E_1(Q)$ and $E_2(Q)$ are independent (because they involve the choices of disjoint sets of indices). So, we have
\begin{align*}
\Pr{E(Q) \mid X_1,\dots,X_n} 
&= \Pr{E_1(Q)\cap E_2(Q)\mid X_1,\dots,X_n} \\
&= \Pr{E_1(Q)\mid X_1,\dots, X_n} \cdot \Pr{E_2(Q)\mid X_1,\dots, X_n} \\
&\ge \prod_{i \in Q} \prod_{k=0}^{c-1} \left(\frac{c-k}{N(B(X_i,r))-k}\right) \cdot \prod_{i \in L_{\jj}\setminus Q}\prod_{k=0}^{c-1} \left(1-\frac{c+1}{N(B(X_i,r))-k}\right).
\end{align*}
When $D$ holds we have
\begin{align*}
\prod_{i \in Q} \prod_{k=0}^{c-1} \left(\frac{c-k}{N(B(X_i,r))-k}\right)
&\geq \left(\prod_{k=0}^{c-1} \frac{c-k}{7^d\beta n\ell^d-k}\right)^{c+1} \\
&\geq \left(\frac{c!}{(7^d\beta nr^d)^c}\right)^{c+1} \\
&\geq \left(\frac{c}{e7^d\beta nr^d}\right)^{c(c+1)} \\
&\geq \exp{(c^2+c)(\log c - \log nr^d)-o(\log n)\Big.}.
\end{align*}
Since we can cover $L_{\jj}$ with $7^d$ cells of side length $\ell$ we also have, for $n$ large enough,
\begin{align*}
\prod_{X_i \in L_{\jj}\setminus Q}\prod_{k=0}^{c-1} \left(1-\frac{c+1}{N(B(X_i,r))-k}\right)
&\geq \prod_{k=0}^{c-1} \left(1-\frac{c+1}{2^d \alpha n\ell^d-k}\right)^{N(L_{\jj})} \\
&\geq \left(1-\frac{2c}{2^d \alpha n\ell^d}\right)^{c7^d \beta n\ell^d} \\
&= \exp{\frac{2c^2 7^d \beta}{2^d \alpha}\cdot\frac{2^d\alpha n\ell^d}{2c} \log\left(1-\frac{2c}{2^d\alpha n\ell^d}\right)} \\
&\geq \exp{\frac{-4c^2 7^d \beta}{2^d \alpha}} \\
&\geq \exp{-o(\log n)\Big.},
\end{align*}
where we used that $c=o(\sqrt{\log n})$ and $n\ell^d = \Omega(\log n)$ so $k < c < 2^d\alpha n\ell^d / 4$ for $n$ large. Therefore, we have for $Q \in \F_{\jj}$
\[
\Pr{E(Q) \mid X_1,\dots,X_n} \geq \exp{(c^2+c)(\log c - \log nr^d)-o(\log n)\Big.} \cdot \ind{D}. 
\]
Using the bound we obtained and the lower bound from \eqref{eq:lowerFj} we get
\begin{align*}
\sum_{Q\in \F_{\jj}} \Pr{E(Q) \mid X_1,\dots,X_n} 
&\geq |\F_{\jj}| \cdot  \exp{(c^2+c)(\log c - \log nr^d)-o(\log n)\Big.} \cdot \ind{D} \\
&\geq \exp{c^2(\log c -\log nr^d) + \log nr^d - o(\log n) \Big.} \cdot \ind{D}.
\end{align*}

Next, we bound the second sum in \eqref{eq:sumEj}. Note that the events $E(Q)$ and $E(Q')$ can only happen at the same time if $Q\cap Q'=\emptyset$. In that case for $Q,Q' \in \F_{\jj}$ we obtain
\begin{align*}
\Pr{E(Q)\cap E(Q') \mid X_1,\dots X_n} 
&\le \Pr{E_1(Q)\cap E_1(Q') \mid X_1,\dots X_n} \\
&\leq \prod_{i \in Q} \prod_{k=0}^{c-1} \frac{c-k}{N(B(X_i,r))-k} \cdot \prod_{i \in Q'} \prod_{k=0}^{c-1} \frac{c-k}{N(B(X_i,r))-k} \\
&\leq \exp{2(c^2+c)(\log c - \log nr^d) + o(\log n)} \cdot \ind{D},
\end{align*}
where the inequality comes from the fact that when $D$ holds and $Q,Q' \in \F_{\jj}$,
\begin{align*}
\prod_{i \in Q} \prod_{k=0}^{c-1} \frac{c-k}{N(B(X_i,r))-k}
&\leq \left(\prod_{k=0}^{c-1} \frac{c-k}{2^d \alpha n\ell^d-k}\right)^{c+1} \\
&\leq \left(\frac{c}{2^d \alpha nr^d/3^d}\right)^{c(c+1)} \\
&\leq \exp{(c^2+c)(\log c - \log nr^d) + o(\log n)}.
\end{align*}
Therefore, for the second sum in \eqref{eq:sumEj}, using the upper bound in \eqref{eq:upperFj} we get
\begin{align*}
\sum_{Q\neq Q'\in \F_{\jj}} \Pr{A(Q)\cap A(Q') \mid X_1,\dots X_n} 
&\leq |\F_{\jj}|^2 \cdot \exp{2(c^2+c)(\log c - \log nr^d) + o(\log n)} \cdot \ind{D} \\
&\leq \exp{2c^2(\log c - \log nr^d) + 2\log nr^d + o(\log n) \Big.} \cdot \ind{D}.
\end{align*}
Thus we can take $p_n^{(1)}$ and $p_n^{(2)}$ as stated.
\end{proof}

\section{Connectivity near the critical radius}\label{sec:upper}

In this section we prove the remaining part of Theorem \ref{thm:main}. We consider $r = \gamma \sqrt{\log n/n}$ with $\gamma > \gamma^{**}$. We only need to prove that $S_n$ is connected whp when $c$ is above the threshold since Theorem \ref{thm:lower} implies that $S_n$ is disconnected whp when $c$ is below it.

\begin{theorem}\label{thm:upper}
Let $\eps \in (0,2)$,  $\gamma \geq \gamma^{**}$ and suppose that 
\[
r=\gamma \left(\frac{\log n}{n}\right)^{1/d} 
\quad\text{and}\qquad
c \ge  \sqrt{\frac{(2+\eps)\log n}{\log\log n}},
\]
Then $S_n(r,c)$ is connected whp.
\end{theorem}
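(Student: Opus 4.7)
The plan is to show that the expected number of connected components of $S_n$ of size at most $n/2$ tends to zero. We condition throughout on the density event from Lemma~\ref{lem:density}, which holds with high probability; under this event every ball $B(X_i,r)$ contains $\Theta(\log n) = \Theta(nr^d)$ visible neighbors, so every vertex contributes $c$ incident edges to $\wh E$ and consequently every connected component of $S_n$ has size at least $c+1$.

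The dominant contribution to disconnection comes from isolated $(c+1)$-cliques, i.e., components of size exactly $c+1$. The computation parallels the lower bound in Theorem~\ref{thm:lower} but with the inequalities reversed: I would bound
\begin{equation*}
\Ex{\sum_{\jj}\ind{E_{\jj}}} \leq \sum_{\jj}\sum_{Q\in\F_{\jj}} \Ex{\ind{E(Q)}}
\end{equation*}
and apply the product-of-ratios estimates from Lemma~\ref{lem:pnbound} as upper bounds (using the upper estimate \eqref{eq:upperFj} on $|\F_{\jj}|$, keeping only the clique factor, and using $N(B(X_i,r))\geq 2^d\alpha n\ell^d$ on $D$). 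This yields an expected number of isolated $(c+1)$-cliques of order $n\cdot \exp{c^2(\log c-\log nr^d)+o(\log n)}$. With $c\geq \sqrt{(2+\eps)\log n/\log\log n}$ and $\log nr^d = \log\log n+O(1)$, the exponent simplifies to $-(\eps/2)\log n+o(\log n)$, so the expectation vanishes and no isolated $(c+1)$-clique exists whp.

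For components of size $k$ with $c+1<k\leq k_0$ for some threshold $k_0=o(\log n)$, I would first-moment over connected subsets of $G_n(r)$. The classical bound on the number of connected subgraphs of size $k$ in a graph of maximum degree $\Delta$ gives $n(e\Delta)^{k-1}/k$ candidate sets, and on $D$ we have $\Delta\leq 7^d\beta n\ell^d = O(\log n)$. For a candidate $k$-set $K$, the probability that every vertex in $K$ chooses all $c$ of its neighbors inside $K$ is at most $\bigl(\binom{k-1}{c}/\binom{2^d\alpha n\ell^d}{c}\bigr)^k$. A short calculation shows the resulting per-$k$ bound is dominated by the $k=c+1$ case by a factor exponentially small in $k-c-1$, so summing over this range again gives $o(1)$.

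The main obstacle is the regime $k>k_0$, which I would handle by a geometric boundary argument. Under $D$, any vertex set $S$ with $k_0\leq|S|\leq n/2$ has a nonnegligible fraction of its vertices lying within distance $r$ of $V\setminus S$, each possessing $\Theta(\log n)$ visible neighbors outside $S$. For each such boundary vertex the probability that all $c$ of its picks stay inside $S$ is at most $(1-\Omega(1))^c = e^{-\Omega(c)}$, and conditional independence of the picks across vertices makes the joint probability superexponentially small in the number of boundary vertices. The delicate point is to formulate a sharp enough isoperimetric statement in $G_n(r)$ to beat the $\binom{n}{|S|}$ enumeration factor across all candidate $S$; this likely requires grouping candidates by their trace on the grid of cells from Lemma~\ref{lem:density} and analyzing each grid-trace separately. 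I expect this boundary step, rather than the clique computation, to be the technical heart of the proof.
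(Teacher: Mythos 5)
Your treatment of small components is sound and matches, in spirit, the first-moment computation the paper performs inside Lemma~\ref{lem:compsize} (where the count is in fact pushed up to $\delta\log n$ using only $\hat c=\sqrt{(2+\eps/2)\log n/\log\log n}$ of the $c$ choices). The genuine gap is the regime $k>k_0$, which you flag as unresolved; the difficulty is not merely technical, and the argument you sketch cannot close as stated. First, the claim that a ``nonnegligible fraction'' of the vertices of a candidate set $S$ lie within distance $r$ of $V\setminus S$ is false for large $S$: if $S$ fills a region of diameter $t$, the boundary layer contains only a fraction of order $r/t$ of its vertices, i.e.\ about $nr(k/n)^{(d-1)/d}$ vertices when $|S|=k$. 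Second, the entropy of the candidates grows linearly in $k$: even after grouping by the trace on the grid (a connected set of $j\approx k/\Theta(\log n)$ cells, of which there are $n e^{O(j)}$), one must still specify which of the $\Theta(\log n)$ points of each occupied cell belong to $S$, an extra factor of $e^{\Theta(k)}$ in general. The probability gain from boundary vertices is at best $\exp{-O(c\,(\log n)\, j^{(d-1)/d})}$, and $c(\log n)j^{(d-1)/d}\gg j$ fails once $j\gg (c\log n)^d$, i.e.\ once $k$ exceeds roughly $c^d(\log n)^{d+1}$. So a union bound over candidate components, however the candidates are grouped geometrically, cannot reach $k=n/2$ with $c=O(\sqrt{\log n})$.

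The paper gets around this with two ideas absent from your sketch. (1) In Lemma~\ref{lem:compsize} the $c$ choices are spent in rounds: after a first round of $\hat c$ choices the minimum component size is $\delta\log n$ (essentially your first-moment argument), and in each of $(c-\hat c)/L$ further rounds of $L$ independent choices per vertex, every component of size in $[\delta\log n,\,n^{1/4}]$ merges with another component whp --- here the union bound runs over the at most $n$ \emph{existing components}, not over all candidate subsets --- so the minimum component size doubles each round and reaches $s=\exp{(\log n)^{1/3}}$. (2) A Peierls-type duality argument on cells of side $3\ell$ (the Proposition together with properties (i)--(iv)): a component of size at least $s$ occupies at least $s/(\lambda\log n)$ cells, and if it were disjoint from the ``black monster'' it would have to be enclosed by a $*$-connected component of white (internally disconnected) cells of size at least a constant fraction of $(s/(\lambda\log n))^{1-1/d}$, whereas a purely local estimate shows the largest white $*$-component has size $O((\log n)^{2/3})$. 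This converts the global union bound you are missing into a local probability estimate per cell, which is what allows the argument to reach full connectivity.
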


We first give a high-level proof using a combinatorial argument which reduces the problem of connectivity to the occurrence of four properties that will be shown to hold in a second part.

We tile the unit cube $[0,1]^d$ into cubes of side length $\ell = (1/3) \flr{1/r}^{-1}$. We then consider cells of side $3\ell$ each one consisting of $3^d$ cubes. A cell is interconnected and colored black if all the vertices in it are connected to each other without ever using an edge that leaves the cell. The other cells are initially colored white. Two cells are connected if they are adjacent (they share a $(d-1)$-dimensional face) and there is an edge of $S_n$ that links a vertex in one cell to a vertex in the other cell. Two cells are $*$-connected if they are adjacent (they share at least a corner) and there is an edge of $S_n$ binding one vertex of each cell. 

Consider the following events:
\begin{enumerate}[(i)]
\item All cells in the grid are occupied and connected to their $2d$ neighbors.
\item The largest $*$-connected component of white cells has cardinality at most $q$.
\item The smallest connected component of $S_n$ is of size at least $s$.
\item Each grid cell contains at most $\lambda \log n$ vertices.
\end{enumerate}

\begin{proposition}Suppose that (i)--(iv) above hold with high probability.
Assuming further that $q,s$ and $\lambda$ are positive functions of $n$ such that
\[
q < \frac 1{d+1}\frac{1}{r^{1-1/d}}
\qquad \text{and} \qquad
 \frac{s}{\lambda\log n} > ((d+1)q)^{d/(d-1)}~,
\]  
then the graph $S_n$ is connected.
\end{proposition}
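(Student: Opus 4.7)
I would argue by contradiction: assume $S_n$ is disconnected and fix a connected component $K \subsetneq V$. Let $A = \{C : C \cap K \neq \emptyset\}$ be the collection of cells meeting $K$. Since each cell has side $3\ell \geq r$, every edge of $S_n$ lies within one cell or crosses to a cell sharing at least a corner, so $A$ is $*$-connected in the cell dual graph.

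Next I would examine the ``wall'' of cells separating $A$ from its face-complement. If a black cell $C \in A$ were face-adjacent to some $C'' \notin A$, property (i) would furnish an $S_n$-edge between $C$ and $C''$; since $C$ is black we have $C \subseteq K$, so this edge would drag a $K$-vertex into $C''$, contradicting $C'' \notin A$. Therefore every face-boundary cell of $A$ is white. A topological fact about $\mathbb{Z}^d$-geometry (Tim\'ar-type: the boundary of a $*$-connected finite set is appropriately connected in the dual sense) then extracts a single $*$-connected white set $W$ that encloses a bounded cell-region $R$ lying on one side. Here the hypothesis $q < \frac{1}{d+1}\,r^{1/d-1}$ plays its role: together with (ii) it ensures $|W| \leq q$ is strictly less than the cell grid's side length $\Theta(1/r)$, so $W$ cannot span $[0,1]^d$ from boundary to boundary and the enclosed region $R$ is genuinely bounded.

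Finally, I would invoke the vertex-isoperimetric inequality in $\mathbb{Z}^d$: any cell-region enclosed by a boundary of $b$ cells satisfies $|R| \leq (c_d\, b)^{d/(d-1)}$, with the constant $c_d$ subsumed into the factor $(d+1)$ of the hypothesis. Hence $|R| \leq ((d+1)q)^{d/(d-1)}$. By (iv), the vertices sitting inside $R$ number at most $((d+1)q)^{d/(d-1)}\,\lambda\log n$. But some entire connected component of $S_n$ (namely, any component whose vertices lie on the enclosed side of $W$) is contained in $R$, so by (iii) we get $s \leq ((d+1)q)^{d/(d-1)}\,\lambda\log n$, contradicting the second hypothesis. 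Hence $S_n$ is connected.

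The main obstacle will be the topological reduction step, namely pinpointing a \emph{single} $*$-connected white wall enclosing a bounded region, rather than several disjoint white $*$-components that only jointly separate some vertices from the rest. This is exactly where the quantitative bound on $q$ is essential: by guaranteeing each $*$-component is small compared to the grid side length, it rules out pathological walls that stretch across $[0,1]^d$ and enclose nothing, and it also guarantees that the enclosed $R$ remains in the regime where the $\mathbb{Z}^d$-isoperimetric inequality can be applied. The remaining cell-counting via (iv) and the isoperimetric step itself are routine.
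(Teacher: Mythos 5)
Your opening moves are sound and the architecture is genuinely different from the paper's. Where the paper builds connectivity in three stages (a crossing ``black connector'' exists because no white $*$-component is long enough to block it; all black components of size at least $1/r$ are welded into a single ``black monster'' using $q(d+1)<r^{1/d-1}$ together with an isoperimetric bound on their boundaries; and finally every stray component $K$ is absorbed by enclosing its cell-set $K^{**}$ in a white wall), you jump straight to a version of the third step applied to an arbitrary component $K$. Your observation that property (i) forces every face-boundary cell of the cell-set $A$ of $K$ to be white is correct, and your closing isoperimetry-plus-(ii)--(iv) computation is essentially the paper's final contradiction.

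However, the step you yourself flag as ``the main obstacle'' is a genuine gap, not a routine technicality, and it is precisely the step the paper's first two stages exist to handle. Nothing in your setup forces the white inner boundary of $A$ to be a single $*$-connected set: if $A$ touches or crosses the boundary of $[0,1]^d$, or if its complement has several face-components, the white wall decomposes into several $*$-components, each of which may have size at most $q$ consistently with (ii) while their union is as large as isoperimetry demands; the bound $|R|\le((d+1)q)^{d/(d-1)}$ then evaporates. Citing a Tim\'ar-type boundary-connectivity lemma does not close this, because such lemmas produce one $*$-connected boundary piece \emph{per component of the complement}, and in the finite box the complement of $A$ need not be connected. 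A telltale symptom is that you only use the hypothesis $q<\frac{1}{d+1}\,r^{1/d-1}$ in the weakened form $q<1/r$: in the paper that hypothesis carries the real load in the second stage, where the isoperimetric lower bound $r^{1/d-1}$ on the boundary of any black component of size at least $1/r$ shows that no white $*$-component of size at most $q$ can separate two such components, and it is the resulting single black monster (together with the closure of $K^*$ under white $*$-chains to form $K^{**}$) that makes the separating wall around a stray component a single white $*$-connected set. You would also need to dispose of the degenerate case where $A$ is the whole grid, so that no wall exists at all; there one argues instead that any other component lives entirely in white cells, hence occupies at most $q$ cells by (ii), contradicting (iii) via (iv). As it stands, the proposal reproduces the paper's endgame while omitting the percolation-style construction that makes the endgame legitimate.
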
 
\begin{proof}
The proof uses a percolation style argument on the grid of cells. We define a \emph{black connector} as a connected component of black cells that links one side of the cube $[0,1]^d$ to the opposite side.

(a) There exists a black connector in the cell grid graph: Note that by a generalization of the celebrated argument of \citet{Kes80}, either there is a black connector, or there is a white $*$-connected component of cells that prevents this connection from happening (one of the two events must occur). In dimension $2$, this blocking $*$-connected component of white cells is a path that separates the two opposite faces of interest; in dimension $d$, the blockage must be a $(d-1)$-dimensional sheet (see also \citealp{Gri89,BR06}). In any case, the $*$-connected component of white cells, if it exists, must be of size at least $1/r$ in order to reach two opposite faces. Since the largest $*$-connected component of white cells has size at most $q < 1/r$, a black connector must exists. The black components of size less than $1/r$ are now recolored blue. Note that this leaves at least the black connector component, of size at least $1/r$.

(b) All remaining black cells are connected: Note that this implies that the corresponding vertices of $S_n$ belong to the same connected component. This collection of vertices of $S_n$ is called the \emph{black monster}. To see this, observe that if two black components of cells are not connected, then they must be separated by a $*$-connected component of white cells. Since every black component of cells has at least $1/r$ members, their boundary (neighboring cells or faces on the boundary of $[0,1]^d$) must be of size at least $1/r^{1-1/d}$ (see \cite{BL91} for the isoperimetric inequality). Among these $1/r^{1-1/d}$ members, at least a $1/(d+1)$ fraction must be part of a white $*$-connected component. (To see this, note that for each boundary face, there is
a unique white boundary cell in the direction of the opposing face of
$[0,1]^d$. This fashion, each white boundary cell is assigned to at most $d$ boundary cells and therefore the number of boundary faces is at most
$d$ times the number of white boundary cells.)
By assumption, $q(d+1) < 1/r^{1-1/d}$, and thus, no such separating white $*$-connected chain can exist.

\begin{figure}[h]
\centering
\includegraphics[height=8cm]{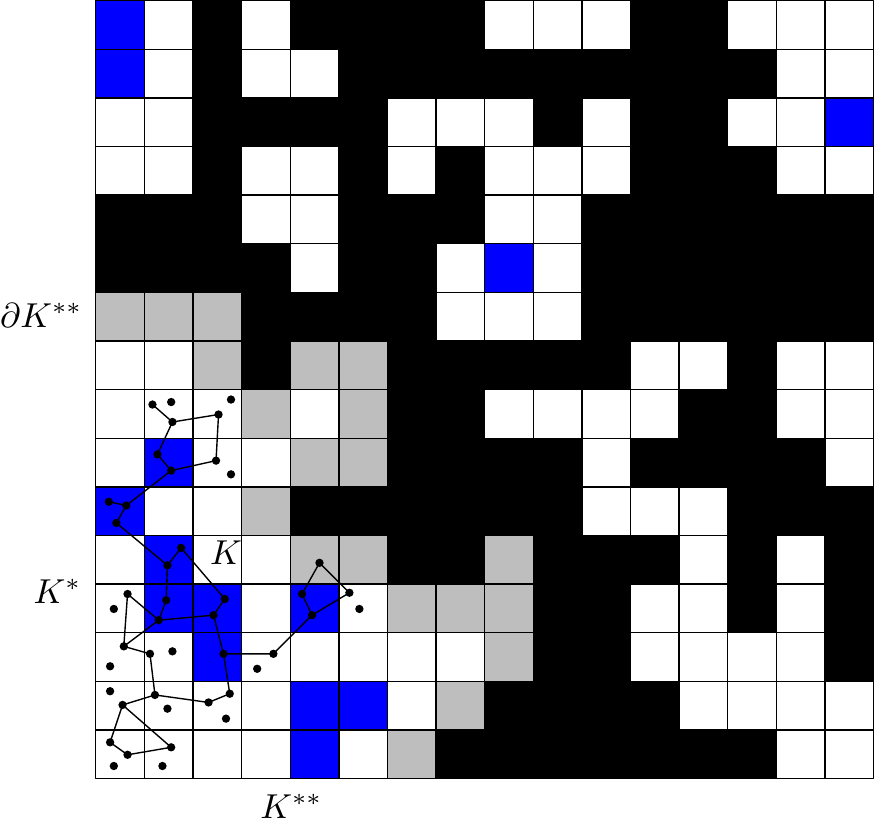}
\caption{\label{fig:percolation}
The component $K$ with its corresponding $*$-connected component of occupied cells $K^*$. We enlarge $K^*$ by adding all the connected white and blue cells to get $K^{**}$. The border cells of $K^{**}$ are in colored in gray.}
\end{figure}

(c) Each vertex connects to at least one vertex of the black monster: To prove this, consider any vertex $j$, outside of the black monster, and write $K$ for the component of $S_n$ it belongs to. If any vertex of $K$ lies in the black monster, then $j$ is connected to the black monster and we are done. So we now assume that all vertices of $K$ belong to white or blue grid cells. Adjacent vertices in $K$ lie in the same cell, or two $*$-adjacent cells. Let $K^*$ be the $*$-connected component of all grid cells visited by vertices of $K$. Enlarge $K^*$ by adding all grid cells that reach $K^*$ via a white $*$-connected chain of cells. The resulting $*$-connected component of white and blue cells is called $K^{**}$, see Figure \ref{fig:percolation}. By assumption, it contains at least $s/ (\lambda \log n)$ cells, since it covers the connected component $K$ of $S_n$ (by properties (iii) and (iv)). So we have exhibited a fairly large $*$-connected component of cells that are not black; the only issue is that it might not be fully white, and we wish to isolate a large \emph{white} $*$-connected component is order to invoke property (iii) for a contradiction. Call a cell of $K^{**}$ a border cell if one of its $2d$ neighbors in the grid is black. Clearly, border cells must be white, because no blue cell can have a black neighbor. Now, $K^{**}$ is surrounded either by border cells, or by pieces of the boundary of the cube. In any case, the boundary of $K^{**}$ (border cells, or boundary faces) has cardinality at least $|K^{**}|^{1-1/d}$, and (as we have already seen) at least a $1/(d+1)$ fraction of it must be made of a $*$-connected component of white cells.
By property (iii), this is impossible. This finishes the proof.
\end{proof}
\medskip

\noindent We now show (i) through (iv) in four lemmas, leaving the hardest one, (iii), for last. We show all these properties with $\lambda$ a sufficiently large constant depending upon $\gamma$, $q = 2 (\log n)^{2/3}$, and $s = \exp{ (\log n)^{1/3} }$, leaving wide margins. 
\medskip

\begin{lemma}[Part (iv)]
Each grid cell contains at most $\lambda \log n$ vertices with high probability, where $\lambda = 3^d\beta \gamma^d$.
\end{lemma}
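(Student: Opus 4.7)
The plan is simply to invoke the Density Lemma (Lemma \ref{lem:density}) on the small cubes of side $\ell$ and sum the estimates over the $3^d$ small cubes constituting one grid cell.

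More precisely, I would first recall that each grid cell of side $3\ell$ is the disjoint union of $3^d$ small cubes of side $\ell = (1/3)\lfloor 1/r\rfloor^{-1}$ from the original tiling, so that $N(\text{grid cell}) = \sum_{C} N(C)$ where the sum runs over the $3^d$ subcubes. Since the hypothesis $r > \gamma^{**}(\log n/n)^{1/d}$ is in force, the Density Lemma applies and yields, with high probability, $N(C) \le \beta n \ell^d$ simultaneously for every small cube $C$. Summing, every grid cell contains at most $3^d \beta n \ell^d$ data points with high probability.

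Finally I would use the bound $\ell < r = \gamma(\log n/n)^{1/d}$, which gives $n\ell^d \le n r^d = \gamma^d \log n$, so that the count in each grid cell is at most
\[
3^d \beta \cdot \gamma^d \log n = \lambda \log n,
\]
with $\lambda = 3^d \beta \gamma^d$ as claimed. There is essentially no obstacle here; the lemma is a direct corollary of the Density Lemma, and the only thing to check is that the range of $r$ assumed in Theorem \ref{thm:upper} ($r = \gamma(\log n/n)^{1/d}$ with $\gamma \ge \gamma^{**}$) is compatible with the hypothesis of Lemma \ref{lem:density}, which is immediate.
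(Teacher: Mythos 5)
Your proposal is correct and is essentially identical to the paper's own proof: both decompose each grid cell into its $3^d$ subcubes of side $\ell$, apply the Density Lemma to bound each subcube's count by $\beta n\ell^d$ with high probability, and then use $\ell < r = \gamma(\log n/n)^{1/d}$ to conclude the bound $3^d\beta\gamma^d\log n$. Nothing further is needed.
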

\begin{proof}
By Lemma \ref{lem:density} we have that every cube of side $\ell$ has less than $\beta n\ell^d$ data points whp. Since $\ell < r$ this implies immediately that every cell contains at most $3^d\beta nr^d < 3^d\beta \gamma^d \log n$ points.
\end{proof}
\medskip

\begin{lemma}[Part (i)]\label{lem:connectedcells}
With high probability, all cells in the grid are occupied and connected to their $2d$ adjacent neighbors.
\end{lemma}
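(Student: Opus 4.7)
My plan is to split the lemma into two sub-claims---every cell is occupied, and every pair of adjacent cells carries at least one edge of $S_n$---and to handle the second through a short geometric construction followed by a union bound.

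Occupancy I dispose of immediately using Lemma~\ref{lem:density}: each cell of side $3\ell$ contains at least $3^d\alpha n\ell^d=\Omega(\log n)$ data points whp, so in particular every cell is non-empty.

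For the inter-cell edge, fix adjacent cells $C,C'$ sharing a $(d-1)$-face $F$, and let $z$ be its centre. Using the sandwich $r\le 3\ell<r/(1-r)$, one checks that the open ball $B(z,r/2)$ sits entirely inside $C\cup C'$, so that $F$ cuts it into two congruent half-balls
\[
R:=C\cap B(z,r/2),\qquad R':=C'\cap B(z,r/2),
\]
each of Lebesgue measure $\tfrac{1}{2}\Vol(B(0,1))(r/2)^d$. The triangle inequality gives $\norm{x-x'}<r$ for every $x\in R$ and $x'\in R'$, so $R$ and $R'$ are pairwise visible in $G_n$. A Chernoff bound on $\Bin(n,\Vol(R))$---a direct variant of the one used in Lemma~\ref{lem:density}---then yields $|R\cap\{X_1,\ldots,X_n\}|\ge\eta\log n$ whp for some constant $\eta=\eta(\gamma,d)>0$, and analogously for $R'$; meanwhile the remark following Lemma~\ref{lem:density} bounds every visibility ball above by $7^d\beta n\ell^d=O(\log n)$ points.

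The edge-selection step is then routine. Conditional on the point positions, a given vertex $X_i\in R$ fails to pick any element of $R'$ among its $c$ random draws with probability at most $(1-\delta)^c$, where $\delta=\delta(\gamma,d)>0$ is a uniform lower bound on $|R'\cap X|/|B(X_i,r)\cap X|$; conditional independence of the selections across distinct $X_i\in R$ then raises this to at most $(1-\delta)^{c\eta\log n}\le\exp(-\delta\eta c\log n)$ per pair. A union bound over the $O(n/\log n)$ ordered adjacent pairs, together with a simultaneous union bound over the corresponding density events, yields a total failure probability of order $(n/\log n)\exp(-\delta\eta c\log n)$, which vanishes because $c\to\infty$ in the regime of Theorem~\ref{thm:upper}. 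The step requiring the most care is the geometric one: verifying that $B(z,r/2)$ fits inside $C\cup C'$ (precisely where $r\le 3\ell$ is used) and choosing $\gamma^{**}$ large enough that the constant $\eta$ in $|R\cap X|\ge\eta\log n$ survives both the ratio bound defining $\delta$ and the final union bound.
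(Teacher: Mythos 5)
Your argument is correct and the probabilistic core --- lower-bound the population of two mutually visible regions by $\Omega(\log n)$, upper-bound every visibility ball by $O(\log n)$, conclude that all $c$-choices from one region miss the other with probability $\exp(-\Omega(c\log n))$, and union-bound over the $O(r^{-d})$ adjacent pairs --- is exactly the paper's. The one genuine difference is your choice of mutually visible regions. The paper takes the two \emph{middle border cubes} of side $\ell$ from the fine grid, one on each side of the shared face; these are already covered by Lemma~\ref{lem:density}, so no new density estimate is needed and the constant $\gamma^{**}=2\sqrt[d]{3}$ is untouched. You instead take the two half-balls $R=C\cap B(z,r/2)$, $R'=C'\cap B(z,r/2)$, which buys a cleaner visibility argument (the triangle inequality from $z$, versus the paper's implicit claim that the diameter of two adjacent $\ell$-cubes is below $r$), but at a price: the half-balls are not cells of the grid, so you must run a fresh Chernoff bound plus a union bound over $O(n/\log n)$ of them, and for the per-region failure probability to be $o(r^d)$ you need $f(\alpha')\,\theta_d\gamma^d/2^{d+1}>1$ where $\theta_d=\Vol B(0,1)$; since $\theta_d\to 0$ as $d$ grows, this can force $\gamma^{**}$ to be enlarged in high dimension, as you yourself flag. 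That is harmless for Theorem~\ref{thm:main} as stated (only existence of a finite $\gamma^{**}$ is claimed), but it is worth noting that reusing the grid cubes, as the paper does, makes this last step free.
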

\begin{proof}
From Lemma \ref{lem:density}, we note that all cardinalities of the cubes of side $\ell$ are at least $\alpha n\ell^d$ (and at most $\beta n\ell^d$) whp. We assume that this event holds. We condition on any point set with this distributional property, leaving only the choices of the $c$ neighbors as a random event.
Consider two neighboring cells of side length $3\ell$ and the corresponding middle border cubes $C$ and $C'$ of side $\ell$ that are adjacent to each other, see Figure \ref{fig:adjacentcells}.

\begin{figure}[h]
\centering
\includegraphics[height=4cm]{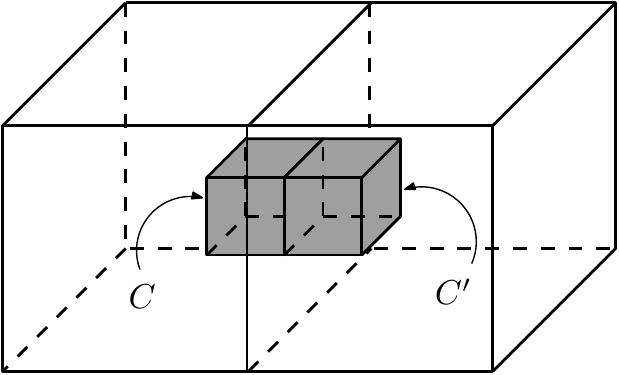} 
\caption{\label{fig:adjacentcells}
Two adjacent cells and the corresponding middle border cubes $C$ and $C'$.}
\end{figure}

The probability that the $c$ choices for a vertex in $C'$ miss any of the $\alpha n\ell^d$ points in $C$ (which are all in range $r$) is less than
\[
\left( 1 - \frac{\alpha}{7^d\beta} \right)^c \leq \exp{\frac{-\alpha c}{7^d\beta}}.
\]
since each ball $B(X_i,r)$ has cardinality at most $7^d n\ell^d$ (see the remark after Lemma \ref{lem:density}). By independence, the probability that \emph{all} points in $C'$ miss those in $C$ with their $c$ choices is not more than
\[
\prod_{i\in C'} \left( 1 - \frac{\alpha}{7^d\beta} \right)^c 
\leq  \left( 1-\frac{\alpha}{7^d \beta} \right)^{c \alpha n\ell^d}
\leq \exp{ \frac{-\alpha^2 c n\ell^d}{7^d\beta} }.
\]
Since there only a total of $O(r^{-d}) = o(n)$ cells, the union bound shows that the probability that there is an empty cell or that two neighboring cells do not connect tends to zero.
\end{proof}
\medskip

\begin{lemma}[Part (ii)]\label{lem:starsize}
The largest $*$-connected component of white cells has cardinality at most $q = 2 (\log n)^{2/3}$ whp.
\end{lemma}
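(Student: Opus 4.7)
\medskip
\noindent\textbf{Proof plan.}
My plan rests on three ingredients: a union bound over $*$-connected animals of white cells of size $q+1$, the observation that the whiteness events of distinct cells are mutually independent, and a sharp upper bound on the probability that any single cell is white.

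First I will count the candidate animals. Since the grid has $O(r^{-d}) = O(n/\log n)$ cells and the classical lattice-animal bound provides at most $A^q$ $*$-connected subsets of size $q + 1$ containing a given cell (with $A = A(d)$ a dimension-dependent constant), the total number of size-$(q+1)$ $*$-connected subsets is at most $O((n/\log n) A^q)$. Second, I will exploit independence: the whiteness of a cell $C$ is determined entirely by the neighbour choices of the vertices lying in $C$, because an edge of $S_n$ is internal to $C$ iff both its endpoints lie in $C$, and the presence of such an edge depends only on the choices of its two endpoints. Because distinct cells have disjoint vertex sets and the neighbour choices are independent across vertices, conditionally on $X_1,\dots,X_n$ the whiteness events of the cells are mutually independent.

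The crux is the single-cell bound
\[
p_w := \Pr{C\text{ is white} \mid X_1,\dots,X_n} \cdot \ind{D} \leq \exp{-\Omega(c)},
\]
where $D$ is the density event of Lemma \ref{lem:density}. A cell is white iff its internal bluetooth subgraph admits a non-trivial cut $(S, V_C \setminus S)$, which we may take to satisfy $1 \leq |S| \leq |V_C|/2$. The probability that a fixed cut carries no internal edge is at most
\[
\prod_{v \in S} \left(1 - \frac{|N(B_v) \cap (V_C \setminus S)|}{N(B_v)}\right)^{c},
\]
since each $v \in S$ must pick its $c$ neighbours from $N(B_v) \setminus (V_C \setminus S)$. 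By Lemma \ref{lem:density}, for $v$ in the central cube $C_0$ a positive constant fraction of $v$'s visible neighbours lies in $V_C$, so when $|S| \leq |V_C|/2$ each factor is at most $(1-c_0)^c = \exp{-\Omega(c)}$ for some $c_0 > 0$. Summing over cuts using $|V_C| = O(\log n)$, the dominant contribution comes from $|S|=1$ (a single internally isolated vertex) and yields $p_w \leq \exp{-\Omega(c)}$.

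Putting the pieces together,
\[
\Pr{\text{some white } *\text{-component has size} \geq q+1} \;\leq\; O\!\left(\frac{n}{\log n}\right) \cdot A^q \cdot p_w^{q+1} \;\leq\; \exp{O(\log n) + O(q) - \Omega(cq)},
\]
which tends to $0$ because $cq \geq \sqrt{(2+\eps)\log n/\log\log n}\,\cdot\,2(\log n)^{2/3} = \Omega((\log n)^{7/6}/\sqrt{\log\log n})$ swamps the other terms. I expect the hardest step to be the single-cell bound for cuts with $|S|$ close to $|V_C|/2$, where the per-vertex factor in the display above need not be small on its own: I plan to handle this by symmetrising, including the analogous factors $(1 - |N(B_u) \cap S|/N(B_u))^c$ from $u \in V_C \setminus S$ so that at least one side of any cut forces a substantial deficit of safe choices.
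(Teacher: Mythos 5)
Your outer shell (Peierls count of $*$-connected animals, conditional independence of the cell colours given $X_1,\dots,X_n$, and the union bound) coincides with the paper's, and your final arithmetic is fine \emph{granted} the single-cell estimate $p_w\le \exp{-\Omega(c)}$. But that estimate is where your proof genuinely differs from the paper's, and as written it has a gap. You enumerate cuts $(S,V_C\setminus S)$ and claim that for $|S|\le |V_C|/2$ each factor $\bigl(1-|N(B_v)\cap(V_C\setminus S)|/N(B_v)\bigr)^c$ is at most $(1-c_0)^c$. This is false for general cuts: the cell has side $3\ell\in[r,2r]$ and hence diameter larger than $r$, so the within-cell visibility graph is \emph{not} complete, and $S$ is an arbitrary vertex subset, not a geometric half. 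For instance, taking $S=N(B(X_v,r))\cap V_C$ for a suitable $v$ (which can have size below $|V_C|/2$) makes $v$'s own factor equal to $1$; more adversarial choices can kill many factors at once. Your remark restricting to ``$v$ in the central cube $C_0$'' does not help, since the cut need not touch the central cube. Symmetrising is the right instinct, but it does not by itself close the gap: what you actually need, in order to beat the $\binom{|V_C|}{|S|}$ entropy of cuts with $|V_C|=\Theta(\log n)$, is a uniform isoperimetric bound of the form
\[
\sum_{v\in S}\frac{|N(B_v)\cap(V_C\setminus S)|}{N(B_v)}+\sum_{u\in V_C\setminus S}\frac{|N(B_u)\cap S|}{N(B_u)}\;=\;\Omega(|S|)
\]
valid for \emph{every} cut with $|S|\le|V_C|/2$, i.e.\ an expansion property of the visibility graph restricted to the cell. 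That is the real content of your approach and it is not supplied; proving it requires a geometric argument (and some care, since Lemma~\ref{lem:density} only controls grid-aligned cubes, not arbitrary intersections $B(X_v,r)\cap C$).

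For comparison, the paper avoids cut enumeration entirely: it anchors at a single vertex $v$, grows the sets $V'$ and $V''$ of first- and second-generation choices landing in the \emph{central} cube $T$ of the cell, shows $|V''|\ge\delta c^2$ whp by stochastic domination with binomials, and then shows every other vertex of the cell hits $\{v\}\cup V'\cup V''$ with one of its $c$ choices, with failure probability about $3^d\beta n\ell^d\exp{-\delta c^3/(7^d\beta n\ell^d)}$. Because every point of the cell can see the central cube, no isoperimetric input is needed. The resulting bound $p\le\exp{-(\log n)^{1/3}}$ is much weaker than your claimed $\exp{-\Omega(c)}$ (which is in fact the true order, dominated by a single internally isolated vertex), but it is exactly calibrated to $q=2(\log n)^{2/3}$ so that $p^q=n^{-2}$ beats the animal count. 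If you want to keep your route, you must prove the displayed expansion inequality; otherwise the seeded two-step argument is the shorter path.
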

\begin{proof}
We start by bounding the number of $*$-connected components of cells of a fixed size $k$. The number cells of $*$-adjacent to any fixed cell is at most $3^d$, so that by Peierls' argument \cite{Pei36} (see also Lemma 9.3 of \citealp{Pen03}), the number of $*$-connected components of size $k$ containing a specified cell is at most $2^{3^d k}$. Overall, the number of $*$-connected components of size $k$ is at most $n 2^{3^d k}$ (since there are at most $O(r^{-d}) = o(n)$ starting cells). 

By Lemma \ref{lem:density}, each cube of side $\ell$ has whp a cardinality contained in $[\alpha n\ell^d, \beta n\ell^d]$, so we assume this event holds and we denote it by $D$. Assume that we can then show that the probability that a cell is white is at most $p$. In that case, the probability that there is a $*$-connected component of size $k$ or larger is not more than
\begin{equation}\label{eq:peierls}
n 2^{3^d k} p^k,
\end{equation}
by the union bound and because the colors of the cells are independent, given the location of the data points. If we can show that 
\[
p \leq \exp{ - (\log n)^{1/3} }
\]
then $k = 2 \log^{2/3} n$ suffices to make the probability bound \eqref{eq:peierls} tend to zero.

We first prove that for $n$ large enough, the probability that a specified cell is white is at most $\exp{-(\log n)^{1/3}}$. By the preceding arguments, this will complete the proof of the lemma. Recall that a cell is colored white if the graph induced by the vertices lying inside the cell is not connected. We show that, with high probability, any two points in the cell are actually linked by a path of length at most six.

Consider now a fixed cell and take any vertex $v$ inside the cell. Let $V'$ be the subset of the $c$ neighbors of $v$ that fall within the cube $T$ of side $\ell$ located at the center of the cell. Consider then all $c$ choices of the vertices in $V'$ that fall in $T$ as well, and that are not in $\{v\} \cup V'$. Call that second collection $V''$. We show that with high probability, all the remaining vertices select at least one point from $\{v\}\cup V' \cup V''$. Each of the remaining vertices selects in any of its $c$ choices a vertex in $\{ v \} \cup V' \cup V''$ with probability at least 
\[
\frac{1 + |V'|+ |V''|}{7^d\beta n\ell^d}
\]
because the possible selection area for a vertex covers at most $7^d$ squares of side $\ell$. The probability that some vertex does not select any neighbor from $\{ v \} \cup V' \cup V''$ is at most
\begin{align*}
\sum_{w \notin \{ v \} \cup V' \cup V''}  \left( 1 - \frac{1 + |V'|+ |V''|}{7^d\beta n\ell^d} \right)^c
&\leq 3^d\beta n\ell^d \times \left( 1 - \frac{1 + |V'|+ |V''|}{7^d\beta n\ell^d} \right)^c\\
&\leq 3^d\beta n\ell^d \exp{ - \frac{|V''| c}{7^d\beta n\ell^d} }.
\end{align*}
If all vertices select a neighbor inside $\{ v \} \cup V' \cup V''$, then clearly, all vertices are connected (and within distance six of each other, pairwise), and the cell is black. As a consequence, the probability of a having a white cell given the event $D$ is thus bounded from above by
\[
\Pr{ |V''| \leq \delta c^2 \mid D } + 3^d\beta n\ell^d \exp{ - \frac{\delta c^3}{7^d\beta n\ell^d} },
\]
where $\delta > 0$ is a constant to be selected later. Note that, for any $\delta > 0$, the second term in the upper bound is smaller than $\exp{ - (\log n)^{1/3} }$ for all $n$ large enough.

Finally, then, we consider $V'$ and $V''$ and condition on the event $D$ of all squares having cardinalities in the range given above. This implies that $N(T) \geq \alpha n\ell^2$. Now, for $V''$ to be small, one of the following events must occur: either $V'$ is small, or $V'$ is not small but $V''$ is small. Note that by definition $|V'|$ is stochastically larger than a 
\[
\Bin \left( c , \frac{N(T)-c}{7^d\beta n\ell^d} \right).
\]
which for $n$ large enough is stochastically larger than a
\[
\Bin \left( c , \frac{\alpha}{2 \beta 7^d} \right).
\]
We write $Z$ for a random variable distributed as above. We repeat a similar argument and note that $|V''|$ is stochastically larger than a $Z$-fold sum of independent binomial random variables, each of parameters $c$ and $(N(T)-c-c^2)/7^d\beta n\ell^d$. Thus, assuming $D$ and for $n$ large enough, $|V''|$ is stochastically larger than a
\[
\Bin \left( Zc , \frac{\alpha}{2\beta 7^d} \right).
\]

So gathering the preceding observations and taking $\delta= (\alpha / 4\beta 7^d)^2$, we obtain
\begin{align*}
\Pr{ |V''| \leq \delta c^2 \mid D}
&\leq \Pr{ Z \leq \frac{\alpha c}{4\beta 7^d} } + \Pr{ Z \geq \frac{\alpha c}{4\beta 7^d} ,\; \Bin \left( Zc , \frac{\alpha}{2\beta 7^d} \right) \leq \delta c^2 } \\
&\leq \Pr{ Z \leq \frac{\alpha c}{4\beta 7^d} } + \Pr{ \Bin \left( \flr{\frac{\alpha c^2}{4\beta 7^d}} , \frac{\alpha}{2\beta 7^d} \right) \leq \delta c^2 } \\
&\leq \exp{-\frac {\alpha c}{16\beta 7^d}}+\exp{-\frac{1}{2} \left(\frac{\alpha c}{4 \beta 7^d}\right)^2}.
\end{align*}
This shows that for $n$ large enough, $p \leq \exp{ -(\log n)^{1/3} }$, as required.
\end{proof}
\medskip

\begin{lemma}\label{lem:compsize}
The smallest connected component of $S_n$ is of size at least $s = \exp{(\log n)^{1/3}}$ whp.
\end{lemma}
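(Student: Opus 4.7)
The plan is a union bound over candidate small components. I condition throughout on the data $X_1,\ldots,X_n$ and work under the density event $D$ of Lemma \ref{lem:density}, which holds whp and guarantees $N(B(X_v,r)) = \Theta(\log n)$ uniformly in $v$. A connected component $K$ of $S_n$ must satisfy $|K| \ge c+1$, since every $v \in K$ picks all $c$ of its chosen neighbors inside $K$; it therefore suffices to show that with high probability, no vertex set $K$ of size $k \in [c+1,\, s-1]$ is ``choice-closed'' (event $A(K)$: each $v \in K$ picks its $c$ neighbors from $K$).

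For the internal probability bound, the hypergeometric formula gives
\[
\Pr{A(K) \mid X_1,\ldots,X_n}
= \prod_{v \in K} \frac{\binom{|K \cap B(X_v,r)|-1}{c}}{\binom{N(B(X_v,r))-1}{c}}
\le \left(\frac{k}{C \log n}\right)^{ck}
\]
for some constant $C = C(\gamma,d) > 0$, valid while $k \lesssim \log n$. To count admissible $K$ of size $k$, I would use the cell footprint $K^*$ (cells of side $\ell$ meeting $K$): since $K$ is connected in $S_n|_K$ via edges of Euclidean length $< r \sim 3\ell$, $K^*$ is $c_0$-connected in the cell grid for some constant $c_0 = c_0(d)$. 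A Peierls-type bound, exactly as in the proof of Lemma \ref{lem:starsize}, gives at most $(n/\log n) \cdot A^m$ such cell sets of size $m$, and given $K^*$ there are at most $\binom{O(m \log n)}{k}$ size-$k$ vertex subsets; summing over admissible $m$ yields a total count of at most $n \, (A' \log n)^k$.

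For the small regime $c+1 \le k \le C_1 \log n$ (with $C_1$ a small constant), combining these estimates bounds the expected number of candidate $K$ by $n \, (A' \log n)^k \, (k/(C \log n))^{ck}$. Plugging in $c = \sqrt{(2+\varepsilon)\log n/\log\log n}$, the critical size $k=c+1$ evaluates (mirroring the lower-bound calculation in the proof of Theorem \ref{thm:lower}) to $n^{-\varepsilon/2 + o(1)}$, because $c(c+1)\log\bigl((C\log n)/(c+1)\bigr) \sim (1+\varepsilon/2)\log n$ dominates the entropy contribution $k\log\log n = o(\log n)$. Larger $k$ in this range produce even smaller contributions, so the sum over $k$ up to $C_1\log n$ is $o(1)$.

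The main obstacle, which I would tackle last, is the regime $C_1 \log n \le k \le s-1$: the base $k/(C\log n)$ in the internal bound exceeds $1$ and the inequality becomes useless. Here I would switch to the complementary \emph{external} constraint: for $K$ to be a component, every $u \notin K$ with $B(X_u,r) \cap K \neq \emptyset$ must choose all $c$ of its neighbors outside $K$. The footprint $K^*$ has $m \ge k/O(\log n)$ cells; a standard grid isoperimetric inequality supplies $\Omega(m^{(d-1)/d})$ boundary cells, each containing $\Omega(\log n)$ vertices whose visibility balls meet $K$ in a nontrivial fraction. The joint external-avoidance probability is then at most $\exp{-\Omega(c\, m^{(d-1)/d})}$, and multiplied by the cell-footprint count $(n/\log n)\, A^m$ together with the internal vertex-choice count, this remains $o(1)$ once $m$ is sufficiently large. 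The delicate point is the cross-over around $k \approx \log n$, where neither the pure internal-closure bound nor the pure external-avoidance bound alone suffices, and one must keep track of both estimates and balance them according to how much of each ball of a vertex of $K$ is covered by $K$.
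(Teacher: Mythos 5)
Your first regime ($c+1 \le k \le C_1\log n$) is handled correctly: the internal-closure probability $(k/(C\log n))^{ck}$ against the Peierls-plus-binomial count, evaluated at the critical size $k=c+1$, does give $n^{-\eps/2+o(1)}$, and this is essentially the computation the paper performs (with $\hat c$ in place of $c$) to rule out components of size below $\delta\log n$. The gap is in the regime you defer to the end, and it is not a technicality --- it is where the whole difficulty of the lemma lives, and the external-avoidance argument you sketch does not close it. Consider the worst candidate: $K$ consists of \emph{all} vertices lying in a solid, ball-like union of $m$ cells. Then every vertex of $K$ whose visibility ball is contained in the region contributes probability $1$ to the internal event, and every outside vertex whose ball misses the region contributes $1$ to the external event; only the $\Theta(\log n \cdot m^{(d-1)/d})$ vertices within distance $r$ of the surface pay anything, each a factor $e^{-\Theta(c)}$. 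So the best probability bound available for such a $K$ is $e^{-\Theta(c\log n\, m^{(d-1)/d})}$, whereas the number of connected cell-sets of size $m$ is $e^{\Theta(m)}$. The union bound therefore diverges as soon as $m \gg (c\log n)^d = (\log n)^{3d/2+o(1)}$, which is still vastly smaller than $s/\Theta(\log n) = e^{(\log n)^{1/3}}/\Theta(\log n)$. Rescuing a first-moment argument here would require stratifying footprints by their boundary size (the count of connected regions with boundary $b$ being $e^{O(b)}$) \emph{and} balancing the within-footprint entropy $\binom{\Theta(m\log n)}{k}$ against per-vertex penalties depending on the local density $|K\cap B(X_v,r)|/N(B(X_v,r))$ --- exactly the ``delicate point'' you name without resolving, and essentially the Kesten--Zhang-type analysis of intermediate cluster sizes in supercritical percolation. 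As written, your proposal does not contain a proof for $k$ between roughly $(\log n)^{3d/2+1}$ and $s$.

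The paper avoids this entire regime by a different device: sprinkling. It lets each vertex make only $\hat c = \sqrt{(2+\eps/2)\log n/\log\log n}$ choices in a first round, and runs your small-$k$ first-moment computation with $\hat c$ to conclude that after round one every component has size at least $\delta\log n$. The remaining $c-\hat c = \Theta(\sqrt{\log n/\log\log n})$ choices are spent in that many rounds of $L$ independent choices each (with replacement, via a coupling that only hurts connectivity); a red/blue cell-coloring argument shows that in each round, every component of size in $[\delta\log n, n^{1/4}]$ connects to some other component with probability $1-n^{1-L\xi}$, so the minimum component size doubles every round and reaches $\delta\log n\cdot 2^{\Theta(\sqrt{\log n/\log\log n})} \gg e^{(\log n)^{1/3}}$. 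This doubling argument is the key idea your proposal is missing; no union bound over large candidate components is ever attempted.
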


\begin{proof}
It is in this critical lemma that we will use the full power of the threshold. The proof is in two steps. For that reason, we grow $S_n$ in stages. Having fixed $\eps$ in the definition of
\[
c = \sqrt{ \frac{(2+\eps) \log n}{\log\log n} },
\]
we find an integer constant $L$ (depending upon $\eps$ -- see further on), and let all vertices select their $c$ neighbors in rounds. In round one, each vertex selects
\[
\hat{c} = \sqrt{ \frac{(2+\eps/2) \log n}{\log\log n} }
\]
neighbors uniformly at random without replacement. Then, in each of the remaining $(c-\hat{c})/L$ rounds, each vertex chooses $L$ further neighbors within its range $r$, but this time independently and with replacement, with a possibility of duplication and selection of previously selected neighbors. This makes the graph less connected (by a trivial coupling argument), and permits us to shorten the proof. Note that
\[
\frac{c-\hat{c}}{L} = \Delta \sqrt{ \frac{2 \log n}{\log\log n} },
\qquad\text{where}\qquad
\Delta = \frac{ \sqrt{1+\eps/2} - \sqrt{1+\eps/4} }{L}.
\]

After the first (main) round, we will show that the smallest component is whp at least $\delta \log n$ in size, for a specific $\delta > 0$. We then show that whp, in each of the remaining rounds, each component joins another component, and thus the minimal component size doubles in each round. After the last round, the minimal component is therefore of size at least
\[
\delta \log n \times 2^{\frac{c-\hat{c}}{L}},
\]
which in turn is larger than $\exp{ (\log n)^{1/3} }$ for all $n$ large enough.

So, on to round one. Let $N_h$ count the number of connected components of $S_n$ of size exactly $h$ obtained after round one. By definition, $N_h = 0$ for $h \leq \hat{c}$. We will exhibit $\delta, \delta' > 0$ such that, for all $n$ large enough,
\[
\sup_{\hat{c} < h \leq \delta \log n} \Ex{ N_h }  \leq n^{-\delta'}.
\]
By the first moment method, this shows that whp, the smallest component after round one is of size at least $\delta \log n$. 

We shall require a general notion of connectivity for sets of cells in the grid of size $\ell$. Given a finite symmetric set $A\subset \Z^d$ (i.e. $-x \in A$ for all $x \in A$) we say that a subset of cells is $A$-\emph{connected} if the corresponding integer coordinates in the grid $\{0,\dots,1/\ell \}^d$ induce a connected graph when we put an edge between $x$ and $y$ if and only if $y - x \in A$. 

Let $D$ be the event that each cube of side $\ell$ has cardinality in $[\alpha n\ell^d , \beta n\ell^d]$, where $\alpha$ and $\beta$ are as in Lemma \ref{lem:density}. So, $D$ happens whp. If $D$ holds, the number of sets of $h$ data points that can be connected is bounded from above as follows: A connected component of size $h$ occupies at most $h$ cells from the grid (one for each node) and these cells form an $A$-connected set where $A=\{-3,\dots,3\}^d$. Again, by Peierls argument the number of $A$-connected sets in the grid of size $j$ is at most $\ell^{-d} 2^{|A|j}$. See Lemma 9.3 of \cite{Pen03}. Therefore, we have at most
\[
\sum_{j=1}^h \ell^{-d} 2^{7^d j}\binom{\flr{j\beta n\ell^d}}{h}  
\leq 2\ell^{-d} 2^{7^d h} \left(\frac{eh\beta n\ell^d}{h}\right)^h
\leq n \left(2^{7^d}e\beta n\ell^d \right)^h
\]
sets of $h$ data points that can be connected.

Having a fixed set $\{i_1,\dots,i_h\}$ of indices, it can only form a connected component if all the $h$ points choose their neighbours among the remaining points in the set. Assuming $h < 2^d \alpha n\ell^d$, the probability of this is at most
\[
\prod_{j=1}^h\left(\prod_{k=1}^{\hat{c}} \frac{h-k}{N(B(X_{i_j},r))- k}\right)
\leq \left(\prod_{k=1}^{\hat{c}} \frac{h-k}{2^d \alpha n\ell^d - k}\right)^h 
\leq \left(\frac{h}{2^d \alpha n\ell^d}\right)^{\hat{c} h}.
\]
Therefore,
\[
\Ex{N_h \ind{D}} \leq n \left(2^{7^d}e\beta n\ell^d \right)^h \left(\frac{h}{2^d \alpha n\ell^d}\right)^{\hat{c} h}.
\]
We can rewrite the upper bound as
\[
f(h) = \exp{\log n + h \log\left( 2^{7^d}e\beta n\ell^d \right) + \hat{c}h \log \left( \frac{h}{2^d \alpha n\ell^d} \right) }.
\]
Note that $f(h)$ is decreasing for $h \leq \rho n\ell^d = \rho(\gamma/3)^d \log n$ where $\rho < 2^d \alpha/e$ because
\[
\frac{d}{dh}\Big( \log f(h)\Big) = \log\left( 2^{7^d}e\beta n\ell^d \right) + \hat{c} \log\left( \frac{eh}{2^d \alpha n\ell^d} \right) < 0
\]
for $n$ sufficiently large since $\hat{c} = \omega(\log n\ell^d)$. For such $\rho$, and $n$ large enough, the upper bound is thus maximal at $k = \hat{c}+1$. We have shown that
\begin{align*}
\Ex{N_h \ind{D}} &\leq f(\hat{c}+1) \\
& = \exp{\log n + (\hat{c}+1) \log\left( 2^{7^d}e\beta n\ell^d \right) + \hat{c}(\hat{c}+1) \log \left( \frac{\hat{c}+1}{2^d \alpha n\ell^d} \right) } \\
&= \exp{\hat{c}^2 \left( \log(\hat{c}+1) - \log(2^d \alpha n\ell^d) \right) + \log n + o(\log n) \Big.} \\
&= \exp{\frac{(2+\eps/2)\log n}{\log\log n} \left( \frac{1}{2}\log\log n - \log\log n \right) + \log n + o(\log n) \Big.} \\
&\leq \exp{-(1+\eps/4)\log n + \log n + o(\log n) \Big.} \\
&\leq \exp{-(\eps/4+o(1))\log n \Big.},
\end{align*}
so we have $\Ex{N_h \ind{D}} \leq n^{-\eps/5}$ for $n$ large enough. This means we can take $\delta = (2\gamma/3)^d \alpha/2e$ and $\delta' = \eps/5$. Define the event $E_h = [N_h > 0]$ of having a component of size $h$. Finally, the probability that a component of size at most $\delta \log n$ exists after round one is bounded from above by
\begin{align*}
\Pr{\bigcup_{h=\hat{c}+1}^{\delta \log n} E_h}
&\leq \Pr{D^c} + \sum_{h=\hat{c}+1}^{\delta \log n} \Pr{E_h \cap D} \\
&\leq \Pr{D^c} + \sum_{h=\hat{c}+1}^{\delta \log n} \Ex{N_h \ind{D}} \\
&\leq o(1) + (\delta \log n) n^{-\eps/5}  \to 0.
\end{align*}

For the final act, we tile the unit cube into cells of dimension
$\ell\times \cdots \times \ell$. Consider a connected component having size $t$ after round
one, where $\delta \log n \leq t \leq n^{1/4}$. Let the
vertices of this component populate the cells. The $i$-th cell
receives $n_i$ vertices from this component, and receives $m_i$
vertices from all other components taken together. The cell is colored
red if $n_i > m_i$ and blue otherwise. First note that not all cells
can be red, since that would mean that $t = \sum_i n_i \geq n/2$. In
one round, each vertex chooses $L$ eligible vertices in its
neighborhood independently and with replacement. Consider two
neighboring cells $i$ and $j$ (in any direction or diagonally) of
opposite color ($i$ is red and $j$ is blue). Conditional on $D$, the
probability that these cells do not establish a link between the size
$t$ component and any of the other components is at most
\begin{align*}
\left( 1 - \frac{n_i}{7^d\beta n\ell^d} \right)^{L m_j}
&\leq \exp{ - \frac{ L n_i m_j }{ 7^d\beta n\ell^d } } \\
&\leq \exp{ - \frac{ L (\alpha n\ell^d / 2)^2 }{ 7^d\beta n\ell^d } } \\
&\leq \exp{ - \frac{ L \alpha^2 nr^d }{ 42^d\beta } } &\text{(recall $\ell > r/3$ and $r^d = \gamma^d \log n/n$),} \\
&=n^{- \frac{ L \alpha^2 \gamma^d }{ 42^d\beta } }. 
\end{align*}
Consider finally the situation that all cells are blue. Then the probability (still conditional on $D$) that no connection is established with the other components is not more than
\begin{align*}
\prod_i \left( 1 - \frac{n_i}{7^d\beta n\ell^d}\right)^{L m_i}
&\leq \exp{ - \sum_i \frac{ L n_i m_i }{ 7^d\beta n\ell^d } } \\
&\leq \exp{ - \frac{ L \alpha/2 }{ 7^d\beta } \sum_i n_i } \\
&\leq \exp{ - \frac{ L \alpha\;\delta \log n }{ 7^d 2 \beta } } \\
&=n^{ - \frac{ L \alpha \delta }{ 7^d 2 \beta } }. 
\end{align*}
Since there are not more than $n$ components to start with, the probability that any component of size between $\delta \log n$ and $n^{1/4}$ fails to connect with another one is bounded from above by
\[
n^{1-L\xi},
\]
where $\xi = \min\{ \alpha^2\gamma^d / 42^d \beta , \alpha\delta / 7^d 2\beta \}$. The probability that we fail in any of the $(c-\hat{c})/L$ rounds is at most equal to the probability that $D$ fails plus
\[
\frac{c-\hat{c}}{L} \times n^{1-L\xi} = o(1)
\]
by choosing $L$ large enough that $L\xi>1$. Thus, whp, after we are done with all rounds, the minimal component size in $S_n$ is at least
\[
\delta \log n \times 2^{\frac{c-\hat{c}}{L}}.
\]
This concludes the proof of Lemma \ref{lem:compsize}.
\end{proof}

\section{Upper bound for the diameter and spanning ratio}\label{sec:diameter}

In the previous sections, we have identified the threshold for connectivity near the critical radius. The connectivity is of course an important property, but the order of magnitude of distances in the sparsified $S_n$ graph should also be as small as possible. Here we show that in the same range of values of $r$ as in Theorem \ref{thm:main}, as soon as $c$ is of the order of $\sqrt{\log n}$ the diameter of the graph $S_n$ is $O(1/r)$ which is clearly best possible as even the diameter of $G_n(r)$ cannot be smaller than $\sqrt{2}/r$. This improves a result of \citet{PPP09}.

\begin{theorem}\label{thm:diam}
There exist a constant $\mu>0$ such that for any $\gamma > \gamma^{**}$, if 
\[
r=\gamma \left(\frac{\log n}{n}\right)^{1/d} 
\quad\text{and}\qquad
c \geq \mu \sqrt{\log n}~,
\]
then the diameter of $S_n$ is at most $36/r$, whp.
\end{theorem}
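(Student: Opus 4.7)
The plan is to build on the same $\ell$-cube tiling used in Section~\ref{sec:upper}, with $\ell = (1/3)\lfloor 1/r\rfloor^{-1}$, so that $\ell \asymp r$ and there are $\Theta(r^{-d})$ cubes. By Lemma~\ref{lem:density}, each cube has $\Theta(\log n)$ vertices whp, and I condition on that density event throughout. The diameter bound will follow from two ingredients: (a) the $S_n$-diameter restricted to each cube is at most some constant $K$; and (b) every pair of face-adjacent cubes carries at least one $S_n$-edge. Given (a) and (b), for any two vertices $u, v$ I follow a rook-path in the cube grid from the cube of $u$ to the cube of $v$, of length $O(1/r)$, spending $\leq K$ edges inside each visited cube and $1$ edge per transition, for a total $u$-$v$ path length $O(K/r)$ which fits inside $36/r$ for $K$ a small constant absorbing the dimensional factors.

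Part (b) is immediate from Lemma~\ref{lem:connectedcells}: for $c = \mu \sqrt{\log n}$ its per-pair failure bound is $\exp(-\Theta(\mu (\log n)^{3/2}))$, super-polynomially small, and the union bound over the $\Theta(r^{-d})$ adjacent cube pairs closes it whp. For part (a), I count paths of fixed length $K$ between any two vertices $u, v$ of the same cube $T$, using intermediates drawn from $T$. A specific template $u, x_1, \ldots, x_{K-1}, v$ is realized in $S_n$ with probability of order $(c/|B(\cdot, r)|)^K$ from the independent neighbor selections at the endpoints, and the number of such templates is $|T|^{K-1} = \Theta((\log n)^{K-1})$, giving an expected path count of order $\mu^K (\log n)^{K/2-1}$. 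Taking $K = 4$ makes the expectation $\Theta(\mu^4 \log n)$; a second-moment (Paley--Zygmund) argument then shows that with probability at least $1 - n^{-\Omega(\mu^4)}$ at least one such path exists. Union bounding over $O(n^2)$ vertex pairs across $\Theta(r^{-d})$ cubes yields (a) whp, provided $\mu$ is chosen large enough so that $\mu^4 > 3$.

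The main obstacle is the polynomial-in-$n$ concentration required in part (a). The direct hub argument from Lemma~\ref{lem:starsize} only yields sub-polynomial per-cube decay ($\exp(-\Theta(\sqrt{\log n}))$) when $c = \Theta(\sqrt{\log n})$, since the bottleneck event there---that $v$'s chosen neighbors in a small subcube form a constant fraction---is a $\mathrm{Bin}(c, \Theta(1))$ tail that concentrates only on the scale of its own mean. Moving to a length-$4$ path count lifts the relevant expectation into the $\Theta(\log n)$ regime, where Chernoff-type bounds do produce polynomial decay, but at the cost of controlling the variance of the path count: path pairs sharing one or more intermediate vertices contribute to the second moment, and the without-replacement nature of the $c$ neighbor selections complicates the independence. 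The clean way to handle this is an explicit variance computation leveraging that neighbor choices at distinct vertices are independent, together with enumeration of overlap patterns; alternatively one could invoke a Suen-type inequality for weakly dependent indicators.
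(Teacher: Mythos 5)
Your decomposition (constant $S_n$-diameter inside each cube, one edge across every pair of adjacent cubes, then a rook path through the $\Theta(1/r)$ cubes of the grid) is exactly the paper's architecture, and your part (b) is the paper's Lemma~\ref{lem:betweencell}. The gap is in part (a), and it is quantitative but fatal as written: Paley--Zygmund does not deliver $1-n^{-\Omega(\mu^4)}$. For a count $Z$ of indicators it gives $\Pr{Z>0}\ge (\Ex{Z})^2/\Exc{Z^2}$, so the failure probability it certifies is at least $\operatorname{Var}(Z)/\Exc{Z^2}$; since the diagonal of the second moment already contributes about $\Ex{Z}$ to the variance, this is at least of order $1/\Ex{Z}=\Theta(1/(\mu^4\log n))$, and the same ceiling applies to any Chebyshev-type variance computation. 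That is hopeless against a union bound over the $\Theta(n\log n)$ within-cube pairs. To get polynomial decay you need an exponential lower-tail inequality (Janson or Suen), and the obvious application fails as well: with your orientation every length-$4$ path from $u$ to $v$ is witnessed in part by a choice made at $u$, so every pair of paths is dependent, the parameter $\delta=\max_\pi\sum_{\pi'\sim\pi}\Ex{I_{\pi'}}$ is of order $\Ex{Z}=\Theta(\log n)$, and the correction factor $e^{2\delta}$ is polynomial in $n$. You correctly diagnosed that the relevant expectation must be lifted to the $\Theta(\log n)$ scale, but concentration at that scale is the actual content of the lemma, and it is not supplied.

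The paper resolves this by a sequential exposure that manufactures independence instead of fighting the correlations. For a vertex $i$, first expose its $c$ chosen neighbors $N_i$, then the choices of each $h\in N_i$, and count the second-generation neighbors landing within distance $r/2$ of the center of $i$'s cell; the ``moon'' $B(X_h,r)\cap B(\mathrm{center},r/2)$ occupies a constant fraction of each $h$'s visibility ball, so this count stochastically dominates a sum of $c$ independent hypergeometrics with total mean $\Theta(c^2)=\Theta(\mu^2\log n)$, and Hoeffding's reduction of hypergeometric tails to binomial ones plus Chernoff gives a lower-tail probability $n^{-\Omega(\mu^2)}$. This is exactly where the polynomial decay comes from: your $\Bin(c,\Theta(1))$ bottleneck is replaced by a $\Bin(c^2,\Theta(1))$-type tail. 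The resulting sets $M_i$ and $M_j$ both sit in a single ball of radius $r/2$, so every vertex of $M_i$ sees every vertex of $M_j$, and their $\Theta(\log n)$ further independent selections connect them except with super-polynomially small probability. The within-cell path thus has length $5$, not $4$ (two generations out from each endpoint plus one bridging edge), which together with the inter-cell hop produces the constant $36$. The price is a uniform density estimate over all moons (Lemma~\ref{lem:moons}, via a Vapnik--Chervonenkis argument); your all-inside-one-cube setup would avoid that machinery, but only if the concentration step could be repaired.
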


This also shows that the \emph{spanning ratio} is within a constant factor of the optimal. Given a connected graph embedded in the unit cube $[0,1]^d$ and two vertices $u$ and $v$ (points in space), let $d(u,v)$ denote the Euclidean distance between $u$ and $v$ when one is only allowed to travel in space along the straight lines between connected nodes in the graph (this is the intrinsic metric associated to the embedded graph). Of course $d(u,v)\ge \|u-v\|$, and one defines the spanning ratio as 
\begin{equation}\label{eq:def_spanning_rationI}
\sup_{u,v} \frac{d(u,v)}{\|u-v\|}~..
\end{equation}
One would ideally want the spanning ratio to be as close to one as possible. In the present case, this definition is not very relevant, since there is a chance that points that are very close in the plane are not connected by an edge. In particular one can show that, with probability bounded away from zero, there is a pair of points at distance $\Theta(n^{1/d})$ for which the smallest path along the edges is of length $\Theta(r)$, so that for some $\epsilon>0$, whp,
$$\liminf_{n\to\infty} \sup_{u,v\in S_n} \frac{d(u,v)}{\|u-v\|}\ge \epsilon (\log n)^{1/d}\to\infty.$$
(To see this, consider the event that for a point $X_i$, one other point falls within distance $\delta n^{-1/d}$ and there are no other points within distance $\epsilon (n/\log n)^{-1/d}$.)
This justifies introducing the constraint that the points in the supremum in \eqref{eq:def_spanning_rationI} be at least at distance $r$. Hence the following modified definition of spanning ratio:
$$\Gamma(S_n):=\sup_{i,j: \|X_i-X_j\|>r} \frac{d(X_i,X_j)}{\|X_i-X_j\|}.$$
Then Theorem~\ref{thm:diam} proves that there exists a constant $K$ independent of $n$ such that $\Gamma(S_n)\le K$.

The idea of the proof of Theorem~\ref{thm:diam} is the following. Partition the unit square into a grid of cells of side length $\ell = (1/3)\flr{1/r}^{-1}$. We show that, with high probability, any two points $i$ and $j$, such that $X_i$ and $X_j$ fall in the same cell, are connected by a path of length at most five. On the other hand, with high probability, any two neighboring cells contain two points, one in each cell, that are connected by an edge of $S_n$. These two facts imply the statement of the theorem. We prove them in two lemmas below. The bound for the diameter follows immediately from the fact that, with high probability, starting from any vertex, a point in a neighboring cell can be reached by a path of length $6$ and any cell can be reached by visiting at most $6 \flr{1/r}$ cells.

Just like in the arguments for the lower and upper bounds for connectivity, all we need about the underlying random geometric graph $G_n$ is that the points $X_1,\ldots,X_n$ are sufficiently regularly distributed. This is formulated as follows: A \emph{moon} is the intersection of two circles, one of radius $r$ and the other of radius $r/2$ such that their centers are within distance $5r/4$ (see Figure \ref{fig:moon}). Let $F$ be the event that for every moon $M$ with centers in $[0,1]^d$ and for every ball
\[
N(M) \in [\sigma\log n, \rho\log n]
\qquad\text{and}\qquad
N(B(x,r)) \in [\mu \log n, \nu \log n]
\]
where $\sigma,\rho,\mu,\nu$ are constants. By the remark after Lemma \ref{lem:density} we know $N(B(x,r)) \in [2^d\alpha n\ell^d, 7^d\beta n\ell^d]$ since $r/3 < \ell < r$ if $r = \gamma \sqrt[d]{\log n/n}$ we can take $\mu = (2\gamma/3)^d \alpha$ and $\nu = (7\gamma)^d\beta$. In the following lemma we show that there exist $\sigma<\rho$ such that the statement for moons holds whp. Thus, we have that $\lim_{n\to \infty} \Pr{F}=1$.

\begin{lemma}\label{lem:moons}
Let $X_1,\ldots,X_n$ be independent random points, uniformly distributed on $[0,1]^d$ and
\[
r=\gamma \left(\frac{\log n}{n}\right)^{1/d} 
\quad\text{with}\qquad  \gamma > \gamma^{**}.
\]
Then, there exists constants $0<\sigma<\rho<\infty$ such that, every moon $M$, with centers in $[0,1]^d$ has at least $\sigma \log n$ and at most $\rho \log n$ data points with high probability.
\end{lemma}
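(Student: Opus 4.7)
The plan is a standard VC-type sandwich argument: compute the volume of an arbitrary moon, discretize the family of all moons to a polynomially-sized net, apply Chernoff plus a union bound to the net, and finally sandwich any genuine moon between two net-approximations.

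First I would show that every moon has volume $\Theta(r^d)$, so $nv = \Theta(\log n)$ where $v=\Vol(M)$. The upper bound is trivial since $M \subset B(c_2,r/2)$. For the lower bound, scale by $r$: a moon is determined by the separation $s = \|c_1-c_2\|/r$ and its scaled volume is $r^{-d}\Vol(M) = \Vol(B(0,1)\cap B(s e_1, 1/2))$, a continuous, strictly positive function on the compact interval $s\in[0,5/4]$ (note $5/4 < 3/2$, so the balls overlap with a margin bounded away from degeneracy). The minimum is attained and positive, giving a uniform constant $c_*>0$ such that $\Vol(M) \ge c_* r^d$.

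Next, I would apply the binomial Chernoff inequality used in Lemma~\ref{lem:density} to a single moon: since $nv = \Theta(\log n)$, for any $0<\sigma<c_*\gamma^d$ and $\rho > 2^{-d}\Vol(B(0,1))\gamma^d$ (say) one obtains $\Pr{N(M) \notin [\sigma\log n, \rho\log n]} \le n^{-\Lambda}$, where $\Lambda=\Lambda(\sigma,\rho,\gamma,d)$ can be made as large as we wish by taking $\sigma$ small and $\rho$ large. I would then discretize: let $\delta = r/n^{2}$ and consider the finite family $\mathcal{M}^*$ of moons whose centers $(x_1',x_2')$ lie on the $\delta$-grid of $[0,1]^d$ and whose radii are chosen from $\{r-\delta, r, r+\delta\}$ and $\{r/2-\delta, r/2, r/2+\delta\}$. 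The cardinality of $\mathcal{M}^*$ is $O(\delta^{-2d}) = O(n^{4d})$, which is polynomial in $n$, so choosing $\sigma,\rho$ with $\Lambda > 4d+1$ the union bound yields $\max_{M'\in \mathcal{M}^*}\Pr{N(M') \notin [\sigma\log n,\rho\log n]} = o(1)$.

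Finally, I would use the usual sandwich: for an arbitrary moon $M$ with centers $x_1,x_2$, round each $x_i$ to the nearest grid point $x_i'\in\delta\Z^d$; then
\[
M^-:=B(x_1',r-\delta)\cap B(x_2',r/2-\delta) \;\subset\; M \;\subset\; B(x_1',r+\delta)\cap B(x_2',r/2+\delta)=:M^+,
\]
and both $M^\pm$ belong to $\mathcal{M}^*$. The expected gap in cardinality satisfies
\[
\Ex{N(M^+)-N(M^-)} = n\bigl(\Vol(M^+)-\Vol(M^-)\bigr) = O(n\delta r^{d-1}) = O\!\left(\frac{\log n}{n^2}\right) = o(1),
\]
so whp no point lies in $M^+\setminus M^-$ at all (by Markov, or by a second union bound over $\mathcal{M}^*$). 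Consequently $N(M)=N(M^\pm)$ whp, and the bound inherited from Step~3 gives $N(M)\in[\sigma\log n,\rho\log n]$ for every moon simultaneously.

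The only mildly delicate point is making sure the sandwich moons still lie in the polynomial family $\mathcal{M}^*$ while controlling both the shift in centers and the shift in radii; the main obstacle — handling a continuum of moons — is dissolved once the volume of $M^+\setminus M^-$ is checked to be $O(\delta r^{d-1})$, which is the step that forces $\delta$ to be polynomially small in $n$.
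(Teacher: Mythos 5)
Your overall strategy is sound, but it is a genuinely different route from the paper's. The paper handles the continuum of moons in one stroke via the Vapnik--Chervonenkis inequality: the class $\mathcal{M}=\{M(x,y)\}$ has VC-dimension at most $d+2$, so a relative-deviation VC bound gives a uniform two-sided estimate on $N(M)$ over all moons simultaneously, with the shattering coefficient $(2en)^{d+2}$ playing the role of your net cardinality. (The paper explicitly notes that ``the extension to any moon may be done using covering arguments'' and chooses VC only as a matter of convenience.) Your discretization is more elementary and self-contained --- it needs nothing beyond the Chernoff bound already used in Lemma~\ref{lem:density} --- at the price of the bookkeeping of the net and the sandwich. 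Your volume computation (compactness of $s\in[0,5/4]$ together with $5/4<3/2$) is exactly the right way to get $\Vol(M)\ge c_* r^d$ uniformly.

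Two technical points need repair, though neither is fatal to the approach. First, your final step is both unnecessary and, as justified, incorrect: a union bound over the $O(n^{4d})$ net elements of the events ``some point falls in $M^+\setminus M^-$'' gives $O(n^{4d}\cdot\log n/n^{2})$, which does not tend to zero. But you do not need $N(M)=N(M^\pm)$ at all: since $M^-\subset M\subset M^+$, monotonicity gives $\sigma\log n\le N(M^-)\le N(M)\le N(M^+)\le\rho\log n$ directly, so that whole step should simply be deleted. Second, the claim that the exponent $\Lambda$ ``can be made as large as we wish by taking $\sigma$ small'' is false for the lower tail: as $\sigma\to 0$ the Chernoff exponent saturates at $n\Vol(M)/\log n$, since one can never beat $\Pr{N(M)=0}=(1-\Vol(M))^{n}$; thus $\Lambda$ is capped by a constant of order $c_*\gamma^{d}$, which need not exceed your required $4d+1$. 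The fix is to coarsen rather than refine the net: take $\delta=\epsilon r$ for a small constant $\epsilon$, so the net has only $O(r^{-2d})=O((n/\log n)^{2})$ elements, the sandwich moons still have volume $\Theta(r^{d})$ (absorb the perturbation into the constants $\sigma$ and $\rho$), and the exponent you must beat drops to just above $2$. This is essentially the covering argument the paper alludes to; the resulting tension between a polynomial union-bound factor and a polynomial tail of exponent $\Theta(\gamma^{d})$ is present in the paper's VC route as well.
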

\begin{proof}
We want to show a concentration bound for the number of points lying in a moon. The bound easily obtained for any fixed moon using Chernoff's bound. The extension to any moon may be done using covering arguments. However, we find it more straightforward to rely on a generalization of the union bound to infinite classes of events. The bound is due to \citet{VC71} and uses the concept of VC-dimension.
	


Let $\Delta = \{ (x,y) \in [0,1]^d: \norm{x-y} < 5r/4\}$ and denote $M(x,y)=B(x,r)\cap B(y,r/2)$ to the moon with centers $x$ and $y$. Here the class of interest is $\mathcal{M} =\{M(x,y): (x,y)\in \Delta \}$. It is easy to see that the VC-dimension of $\mathcal{M}$ is at most $d+2$ since for any set of $d+2$ points of the unit cube, it is impossible to shatter $d+2$ points using sets in $\mathcal{M}$. Consider $n$ independent points uniformly distributed in $[0,1]^d$. Fix $\eps>0$, and for $(x,y) \in \Delta$, let $E_{x,y}$ be the event that the number $N_{x,y}=N(M(x,y))$ of points falling in the moon $M(x,y)$ satisfies $| N_{x,y} - \Ex{N_{x,y}} |\geq \eps \Ex{N_{x,y}}$. Then, a variant of the Vapnik-Chervonenkis inequality (Theorem~7 of \cite{BBL04}) implies
\begin{align*}
\Pr{\cup_{(x,y)\in \Delta} E_{x,y}}
&=\Pr{\exists x,y \in \Delta: | N_{x,y} - \Ex{N_{x,y}} | \geq \eps \Ex{N_{x,y}} \Big. } \\
&\leq 8 (2en)^{d+2}  \sup_{(x,y) \in \Delta} e^{-n \eps^2 \Ex{N_{x,y}} }\\
&\leq 8 (2en)^{d+2} \exp{-n^2 \eps^2 \theta_d 2^{-d} r^d},
\end{align*}
since any moon $M(x,y)$ has volume at least $\theta_d 2^{-d} r^d$, so that $\inf_{(x,y)\in \Delta} \Ex{N_{x,y}}\geq \theta_d 2^{-d} n r^d$. Taking, for instance, $\eps=1/2$, this yields that any moon $M(x,y)$, $(x,y)\in \Delta$ contains between $1/2 \cdot \theta_d 2^{-d} nr^d$ and $2\cdot 2^{-d} n r^{d}$ points with high probability, which completes the proof.
\end{proof}

The key lemma is the following.

\begin{lemma}\label{lem:withincell}
Fix $\{X_1,\ldots,X_n\}$ such that $F$ occurs with $\nu \geq 128$. Let $i,j$ be such that $X_i$ and $X_j$ fall in the same cell of the grid. If $c \geq  \sqrt{(\nu/2)\log n}$ then
\[
  \Pr{d(i,j) >5 \mid X_1,\dots, X_n} \leq \frac{1}{n}(1+o(1))~.
\]
where $d(i,j)$ denotes the distance of $i$ and $j$ in the graph $S_n$.
\end{lemma}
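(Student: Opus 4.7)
The plan is to exhibit, with probability $\geq 1-(1+o(1))/n$, a path of length at most $5$ from $i$ to $j$ in $S_n$ of the form $i \to u_1 \to u_2 \to v_2 \to v_1 \to j$: two steps out from $i$, two steps out from $j$, joined by a single connecting edge between the $2$-hop neighborhoods $N_2(i)$ and $N_2(j)$. Since $X_i$ and $X_j$ lie in the same cell of side $\ell \leq r/3$, they are at Euclidean distance at most $\ell\sqrt{d}$, so one can fix a ball $B^*$ of radius $r/4$ centered near $(X_i+X_j)/2$ that is contained in $B(X_i,r)\cap B(X_j,r)$ and satisfies $B^* \subset B(X_v,r)$ for every $v \in B^*$. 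Applying the event $F$ to a moon with (almost) coincident centers gives $|B^*| = \Theta(\log n)$.

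\textbf{Two-hop expansion.} I reveal the random $c$-out choices in stages, exposing a vertex's choices the first time it is encountered. In Stage $1$, reveal the choices of $i$: each lands in $B^*$ with probability $|B^*|/|B(X_i,r)| = \Omega(1)$, so by Chernoff at least $\Omega(c)$ of them fall in $B^*$ with failure probability $e^{-\Omega(c)}$. In Stage $2$, reveal the choices of those $\Omega(c)$ vertices; since each such $v \in B^*$ satisfies $B^* \subset B(X_v,r)$, the $\Omega(c^2)$ independent landing trials (together with $c^2 = \Omega(\log n) = \Omega(|B^*|)$, which forces a constant fraction of $B^*$ to be hit) yield $|N_2(i)\cap B^*| \geq \eta \log n$ for some $\eta>0$, with failure probability $e^{-\Omega(c^2)}$. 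Repeat Stages $1$--$2$ on the $j$-side using the fresh choices of $j$ and of $N_1(j) \cap B^*$ to obtain $|N_2(j) \cap B^*| \geq \eta \log n$. Under $c^2 \geq (\nu/2)\log n$ with $\nu \geq 128$, each failure probability is $n^{-\Omega(1)}$.

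\textbf{Connecting edge.} For the final edge, reveal the $c$ out-choices of the vertices of $N_2(i) \cap B^*$, which by the book-keeping have not yet been exposed. Each such choice lies in $N_2(j) \cap B^*$ with probability at least $|N_2(j) \cap B^*|/(\nu\log n) = \Omega(1)$, independently; the probability that none of the $c \cdot |N_2(i) \cap B^*| = \Omega(c \log n)$ choices does so is at most $e^{-\Omega(c \log n)} = n^{-\Omega(\sqrt{\log n})} = n^{-\omega(1)}$. Any such edge yields a $5$-path from $i$ to $j$.

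\textbf{Main obstacle.} The delicate point is the sequential revelation of randomness: each vertex's $c$ choices are fixed once and for all, so one must ensure that each stage consumes truly fresh randomness. The device of revealing a vertex's choices only on its first encounter, and queuing each vertex at most once, makes the $\Omega(c \log n)$ trials of the connecting step independent of Stages $1$--$2$. Overlaps between the $i$- and $j$-sides (shared vertices in $N_2(i) \cap N_2(j)$, or doubly-present edges) only produce shorter paths and therefore help. The stated bound $(1+o(1))/n$ then comes from summing all the above failure probabilities; the dominant term is the Stage $2$ Chernoff tail $e^{-\Omega(c^2)}$, which is comfortably below $1/n$ once $\nu$ is large enough, as assumed.
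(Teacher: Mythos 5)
Your overall architecture matches the paper's: build two-hop neighborhoods of $i$ and of $j$ that are both $\Theta(\log n)$ in size and both confined to a common region of diameter at most $r$, then connect them by one more edge, yielding a path of length five. The connecting-edge step and the final union bound are fine. But there is a genuine quantitative gap in your Stage 1, and it is exactly the point where the paper's ``moon'' device is needed. You require that $\Omega(c)$ of the $c$ choices of $i$ land inside the small common ball $B^*$, and you charge a failure probability of $e^{-\Omega(c)}$ for this. Since here $c=\Theta(\sqrt{\log n})$, that failure probability is $e^{-\Theta(\sqrt{\log n})}$, which is enormously \emph{larger} than the target $1/n=e^{-\log n}$; indeed with probability at least $(1-p)^c=e^{-\Theta(\sqrt{\log n})}$ \emph{none} of $i$'s choices land in $B^*$, so this bottleneck cannot be removed by tuning constants. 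Your closing remark that the dominant error term is the Stage~2 tail $e^{-\Omega(c^2)}$ is therefore wrong: the dominant term in your own argument is the Stage~1 tail, and it destroys the claimed bound $(1+o(1))/n$.

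The paper avoids this by never filtering the first-hop neighbors. It takes \emph{all} $c$ neighbors $h_1,\dots,h_c$ of $i$, wherever they fall in $B(X_i,r)$, and for each $h_k$ counts the second-hop choices that land in the fixed ball $R$ of radius $r/2$ centered at the center of the cell. The region from which $h_k$ can hit $R$ is the moon $B(X_{h_k},r)\cap R$, and the event $F$ guarantees every such moon contains $\Theta(\log n)$ points, so every $h_k$ contributes a hypergeometric count with mean $\Theta(c)$. The total is then a sum of $c$ (suitably decoupled) hypergeometrics with mean $\Theta(c^2)=\Theta(\log n)$, and Chernoff at that scale gives a per-vertex failure probability $n^{-2}$ (this is where $\nu\ge 128$ enters), hence $1/n$ after the union bound. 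To repair your proof you should adopt this structure; while doing so, also fix the smaller issues: the inequality is $\ell\ge r/3$, not $\ell\le r/3$; the event $F$ controls moons and balls of radius $r$, not balls of radius $r/4$, so $N(B^*)=\Theta(\log n)$ needs its own justification; and the $c$ choices of a single vertex are made without replacement, so the ``independent trials'' in your Chernoff steps must be replaced by hypergeometric stochastic domination (the paper invokes Hoeffding's reduction of hypergeometric tails to binomial ones).
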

\begin{proof}
Let $M_i\subset \{1,\dots,n\}$ denote the set of all vertices $h$ such that $d(i,h) \leq 2$ and $X_h$ is within Euclidean distance $r/2$ of the center of the grid cell that contains $X_i$. The outline of the proof is the following: It suffices to show that $M_i$ contains a large constant times $\log n$ vertices. Since the same is true for $M_j$ and any two vertices in $M_i \cup M_j$ are within Euclidean distance $r$, with high probability there exists an edge between $M_i$ and $M_j$, establishing a path of length $5$ between $i$ and $j$. Let $N_i$ denote the set of $c$ neighbors picked by $i$. Then each $h \in N_i$ chooses its $c$ neighbors. Those that fall in the moon defined by $X_i$ and the center of the cell belong to $M_i$, see Figure \ref{fig:moon}.

\begin{figure}[ht]
\begin{center}
\leavevmode\includegraphics[scale=1.0,clip=false]{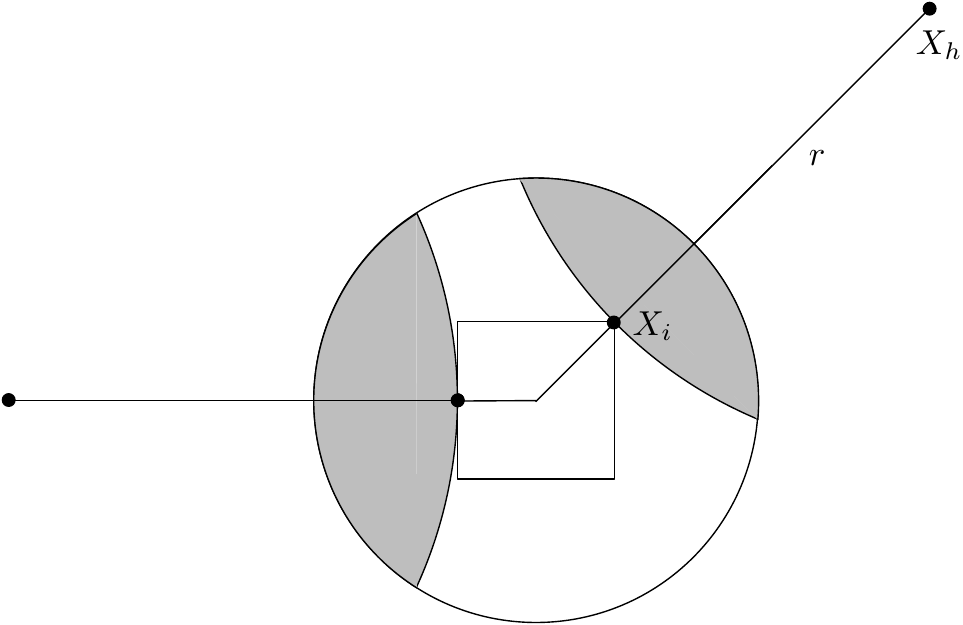}
\end{center}
\caption{\label{fig:moon}
For any point $X_i$ in a box of edge length $\ell$, if $h \in N_i$ is a neighbor selected by $i$, then $h$ may select neighbors within distance $r/2$ of the center of the square from the shaded regions--the so-called ``moons''.  The volume of any moon is at least a constant times $r^d$.
The figure shows two possible positions of $X_i$ in a box with a corresponding neighbor and moon.}
\end{figure}

Next we establish the required lower bound for the cardinality of $M_i$. Clearly, $|M_i|$ is at least as large as the number of neighbors selected by the vertices in $N_i$ that fall in $R$, the ball of radius $r/2$, centered at the mid-point of the cell into which $X_i$ falls. Denote by $h_1,\ldots,h_c$ the $c$ points belonging to $N_i$. Then 
\[
 | M_i | \geq | N_{h_1}\cap R | + | N_{h_2}\setminus N_{h_1} \cap R | + \cdots +
 | N_{h_c}\setminus \left(N_{h_1}\cup \ldots\cup N_{h_{c-1}}\right) \cap R |~.
\]
$h_1$ picks its $c$ neighbors among all points within distance $r$. The number of those neighbors falling in $R$ has a hypergeometric distribution. Since we are on $F$,  $| N_{h_1}\cap R |$ stochastically dominates $H_1$, a hypergeometric random variable with parameters $(c,\nu\log n,(\nu-\rho)\log n)$.
To lower bound the second term on the right-hand side, and to gain independence, remove all $c$ neighbors picked by $h_1$. Then $| N_{h_2}\setminus N_{h_1} \cap R |$ stochastically dominates $H_2$, a hypergeometric random variable with parameters $(c,\nu\log n-c,(\nu-\rho)\log n+c)$ (independent of $H_1$). Continuing this fashion, we obtain that $| M_i |$ is stochastically greater than $\sum_{i=1}^{c} H_i$ where the $H_i$ are independent and $H_i$ is hypergeometric with parameters $(c,\nu\log n-(i-1)c,(\nu-\rho)\log n+(i-1)c)$. Since $c \geq  \sqrt{(\rho/2)\log n}$, this may be bounded further as $\sum_{i=1}^{c} H_i$ is also stochastically greater than $\sum_{i=1}^{c} \wt{H}_i$ where the $\wt{H}_i$ are i.i.d.\ hypergeometric random variables with parameters $(c,(\nu/2)\log n,(3\nu/2)\log n)$.

Clearly, $\Exc{ \sum_{i=1}^{c} \wt{H}_i } = c^2/4  \geq (\nu/8)\log n$. We may bound the lower tail probabilities of $\sum_{i=1}^{c_n} \wt{H}_i$ by recalling an observation of \citet{Hoe63} according to which the expected value of any convex function of a hypergeometric random variable is dominated by that of the corresponding binomial random variable. Therefore, any tail bound obtained by Chernoff bounding for the binomial distribution also applies for the hypergeometric distribution. In particular,
\[
  \Pr{ \sum_{i=1}^{c} \wt{H}_i \leq \frac{1}{2}\Ex{ \sum_{i=1}^{c} \wt{H}_i } }
 \leq \exp{ \frac{-\Ex{ \sum_{i=1}^{c} \wt{H}_i }}{8} }
  \leq \exp{ - \frac{\nu}{64}\log n } \leq n^{-2}~.
\]
Thus, by the union bound, we obtain that
\[
  \Pr{ \exists i \in \{1,\dots,n\}: | M_i | \leq \frac{\nu\log n}{16} \;\Big\vert\; X_1,\dots,X_n, F} \leq \frac{1}{n}~.
\]
Thus, we have proved that with high probability, for every vertex $i$, the number of second generation neighbors (i.e., the neighbors selected by the neighbors selected by $i$) that end up within distance $r/2$ of the center of the grid cell containing $i$ is proportional to $\log n$. In particular, if $i$ and $j$ are two vertices in the same cell, then both $M_i$ and $M_j$ contain at least $(\nu/16)\log n$ points. If two of these points coincide, there is a path of length $4$ between $i$ and $j$. Otherwise, with very high probability, at least one vertex in $M_i$  selects a neighbor in $M_j$, creating a path of length five. Indeed, the probability that all neighbors selected by the vertices in $M_i$ miss all vertices in $M_j$, given that $| M_i |$ and $| M_j |$ are both greater than $(\nu/16)\log n$ and $M_i\cap M_j =\emptyset$ is at most
\begin{eqnarray*}
\prod_{h \in M_i}
\prod_{k=0}^{c-1}
\left(1-\frac{(\nu/16)\log n-k}{\rho\log n}\right)
\leq 
\left(1-\frac{\nu}{32\rho}\right)^{c (\nu/16)\log n}
\end{eqnarray*}
which goes to zero faster than any polynomial function of $n$. (Here we used that fact that $c \leq (\nu/32)\log n$ for a sufficiently large $n$.) Finally, we may use the union bound over all pairs of at most $\binom{n}{2}$ pairs of vertices $i$ and $j$ to complete the proof of the lemma.
\end{proof}

\begin{lemma}\label{lem:betweencell}
Suppose $X_1,\ldots,X_n$ are such that $F$ occurs. The probability that there exist two neighboring cells in the grid such that $S_n$ does not have any edge between the cells is bounded by $n^{-\delta}$ for all $\delta >0$ and sufficiently large $n$.
\end{lemma}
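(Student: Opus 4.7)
The plan is to adapt the argument of Lemma~\ref{lem:connectedcells} to the finer grid used in this section, where cells have side $\ell$ rather than $3\ell$. The gain is that $c \geq \mu\sqrt{\log n}$ is much larger here than the near-threshold value of Section~\ref{sec:upper}, so the per-pair failure probability will decay faster than any polynomial in $n$, easily surviving the union bound over the $O(1/\ell^d) = O(n/\log n)$ ordered pairs of adjacent cells.

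For a pair of adjacent cells $C_1, C_2$ sharing a $(d-1)$-dimensional face $H$ centered at a point $p$, I first identify two small boxes $A_i \subset C_i$ sitting against $H$ and symmetric about $p$, with perpendicular extent $h$ into $C_i$ and parallel side $w$ in each of the $d-1$ face directions. Choosing $h = w = \min(\ell,\, r/\sqrt{d+3})$ ensures on the one hand that $A_i \subset C_i$ (using $\ell \geq r/3$), and on the other hand that $(2h)^2 + (d-1)w^2 \leq r^2$, so that any $X_v \in A_1$ and $X_w \in A_2$ are within Euclidean distance $r$ and hence visible to each other in $G_n$. Each $A_i$ is an axis-aligned box of volume $\Theta(r^d)$. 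A VC-dimension argument analogous to that of Lemma~\ref{lem:moons}, now for the family of axis-aligned boxes of this size (which has VC-dimension at most $2d$), yields $N(A_i) \geq c_1 \log n$ for some $c_1 > 0$, simultaneously over every pair of adjacent cells, with high probability. I absorb this density event into $F$.

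Given $F$, each vertex $v \in A_1$ has at most $\nu \log n$ visible neighbors, among which at least $N(A_2) \geq c_1 \log n$ lie in $A_2$. The probability that all $c$ neighbors chosen by $v$ miss $A_2$ is therefore at most $(1 - c_1/\nu)^c \leq \exp(-c_1 c / \nu)$. Since the neighbor selections of distinct vertices are conditionally independent given the points, the probability that no vertex of $A_1$ chooses any vertex of $A_2$ is at most $\exp(-c_1 c N(A_1)/\nu) \leq \exp(-c_1^2 c \log n / \nu)$. With $c \geq \mu \sqrt{\log n}$, this is of order $\exp(-\Omega((\log n)^{3/2}))$. A union bound over the $O(n/\log n)$ ordered pairs of adjacent cells yields a total failure probability of order $n \exp(-\Omega((\log n)^{3/2})) = o(n^{-\delta})$ for every fixed $\delta > 0$, as required.

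The only delicate point is the density estimate $N(A_i) \geq c_1 \log n$: the sub-box $A_i$ is not itself a ball or a moon, so this does not follow from $F$ as literally defined. However, the VC-dimension argument underlying Lemma~\ref{lem:moons} applies verbatim to axis-aligned boxes of volume $\gtrsim r^d$, so the event $F$ may be enlarged to include such box counts without affecting its high-probability status; alternatively, one can apply Lemma~\ref{lem:density} on a suitably refined grid (at the cost of enlarging $\gamma^{**}$ by a constant factor so that the refined cells still contain $\Theta(\log n)$ points). Once this density estimate is in place, the remainder of the proof is a routine first-moment calculation mirroring that of Lemma~\ref{lem:connectedcells}.
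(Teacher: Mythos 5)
Your proof is correct and follows essentially the same route as the paper's: the paper simply asserts that any two points in adjacent cells of side $\ell$ are within distance $r$ and then reuses the computation from the end of Lemma~\ref{lem:withincell} (each of the $\Theta(\log n)$ vertices in one cell sees a constant fraction of its at most $\nu\log n$ visible neighbors in the other cell, so the probability that all choices miss is $e^{-\Omega(c\log n)}$, which survives the union bound over the $O(n/\log n)$ adjacent pairs). Your sub-box construction $A_1,A_2$ is merely a more careful treatment of the visibility claim---which as stated in the paper needs $\ell\sqrt{d+3}\le r$ and hence implicitly small $d$---and does not change the structure of the argument.
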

\begin{proof}
Since the cells are of edge length $\ell$, any two points in two neighboring squares are within distance $r$. Since under $F$ there are a logarithmic number of vertices in each cell, the same argument as at the end of the proof of Lemma \ref{lem:withincell} shows the statement.
\end{proof}

%



\bibliographystyle{abbrvnat}
\bibliography{bt}

\end{document}